\documentclass[11pt,notitlepage,reqno]{amsart}
\usepackage[margin=1in]{geometry}
\usepackage{mathtools} % also loads amsmath
\usepackage{amssymb} % also loads amsfont
\usepackage{mathrsfs}
\usepackage{bbm}
\usepackage{enumitem}
\usepackage{setspace}
\usepackage[dvipsnames]{xcolor}
\usepackage{multicol}
\usepackage{lastpage}
\usepackage{float}
\usepackage{graphicx}
\usepackage{outlines}
\usepackage[labelfont=small,sc]{caption}
\usepackage{hyperref}
\usepackage[noabbrev,capitalise]{cleveref}
\usepackage{nameref}
\usepackage{subcaption}
\usepackage{bookmark}
\usepackage{marginnote} %to make margin comments
\bookmarksetup{
	numbered,
	open
}

\newcounter{mylabelcounter}

\makeatletter
\newcommand{\labelText}[2]{%
\text{#1}\refstepcounter{mylabelcounter}%
\immediate\write\@auxout{%
  \string\newlabel{#2}{{1}{\thepage}{{\unexpanded{#1}}}{mylabelcounter.\number\value{mylabelcounter}}{}}%
}%
}
\makeatother

%---------------------------------------

% % % Theorem etc. commands
\usepackage{amsthm, thmtools}
%\allowdisplaybreaks
\newtheorem{theorem}{Theorem}[section]
\newtheorem*{theorem*}{Theorem}
\newtheorem{lemma}[theorem]{Lemma}
\newtheorem{proposition}[theorem]{Proposition}
\newtheorem*{proposition*}{Proposition}
\newtheorem{corollary}[theorem]{Corollary}
\newtheorem*{corollary*}{Corollary}

\newtheorem*{problem*}{Problem}

%---------------------------------------
% currently doesnt work for some reason
\theoremstyle{plain} % just in case the style had changed
\newcommand{\thistheoremname}{}
\newtheorem{genericthm}[theorem]{\thistheoremname}
\makeatletter
\NewDocumentEnvironment{namedthm}{ m o }
{\renewcommand{\thistheoremname}{#1}%
    \begin{genericthm}[#2]
        \def\@currentlabelname{#1}
    }
    {\end{genericthm}}
\makeatother  
\newtheorem*{genericthm*}{\thistheoremname}
\makeatletter
\newenvironment{namedthm*}[1]
{\renewcommand{\thistheoremname}{#1}%
    \begin{genericthm*}%
        \def\@currentlabelname{#1}}%
    {\end{genericthm*}}
\makeatother
%---------------------------------------

\theoremstyle{remark}
\newtheorem{remark}[theorem]{Remark}

\newtheorem{example}[theorem]{Example}

\newtheorem*{claim*}{Claim}

%
%\newcounter{subequation}
%\renewcommand*{\thesubequation}{\arabic{section}.\arabic{theorem}.\arabic{subequation}}
%\newenvironment{subequation}
%  {\stepcounter{subequation}%
%    \addtocounter{equation}{-1}%
%    \renewcommand\theequation{\arabic{section}.\arabic{theorem}.\arabic{subequation}}\equation}
%  {\endequation}
%\declaretheorem[style=definition,numberlike=subequation,name=Example]{subexample}
%\declaretheorem[style=plain,numberlike=subequation,name=Claim]{subclaim}

%---------------------------------------

% % % Equation numbering
%this has equations numbered within sections 1.1,1.2, ... 2.1,...
\numberwithin{equation}{section}
\numberwithin{theorem}{section}
\usepackage{chngcntr}
% \counterwithout{subsection}{section}

% % % Section numbering
% \renewcommand\thesection{\Roman{section}} % section style
%\renewcommand*{\thesection}{\arabic{section}}

% to change itemize symbols
 % note \labelitemi refers to first level, \labelitemii to second level, etc.

%---------------------------------------

% % % Pagestyle
%\pagestyle{plain}% {plain} removes headers but keeps page numbering; {empty} removes page numbers also
%\usepackage[parfill]{parskip} % Blank line between paragraphs and no indent at start of paragraph
%\setlength{\parfillskip}{0pt plus 1fil} % to fix parskip messing up hfill
\overfullrule 5pt
\allowdisplaybreaks
\textwidth160mm
\setstretch{1.1}

%---------------------------------------

% % % Table of contents 

% indenting
\setcounter{tocdepth}{3}% to get subsubsections in toc
\setcounter{secnumdepth}{3}% Number subsubsections
%\let\oldtocsection=\tocsection
%\let\oldtocsubsection=\tocsubsection
%\let\oldtocsubsubsection=\tocsubsubsection
%\renewcommand{\tocsection}[2]{\hspace{0em}\oldtocsection{#1}{#2}}
%\renewcommand{\tocsubsection}[2]{\hspace{1em}\oldtocsubsection{#1}{#2}}
%\renewcommand{\tocsubsubsection}[2]{\hspace{2em}\oldtocsubsubsection{#1}{#2}}

% Add bold to \section titles in ToC and remove . after numbers

% Remove . after numbers in \subsection

% Remove . after numbers in \subsubsection

% to get indentation and dots to page number
\makeatletter
\def\@tocline#1#2#3#4#5#6#7{\relax
  \ifnum #1>\c@tocdepth % then omit
  \else
    \par \addpenalty\@secpenalty\addvspace{#2}%
    \begingroup \hyphenpenalty\@M
    \@ifempty{#4}{%
      \@tempdima\csname r@tocindent\number#1\endcsname\relax
    }{%
      \@tempdima#4\relax
    }%
    \parindent\z@ \leftskip#3\relax \advance\leftskip\@tempdima\relax
    \rightskip\@pnumwidth plus4em \parfillskip-\@pnumwidth
    #5\leavevmode\hskip-\@tempdima
      \ifcase #1
       \or\or \hskip 1em \or \hskip 2em \else \hskip 3em \fi%
      #6\nobreak\relax
    \dotfill\hbox to\@pnumwidth{\@tocpagenum{#7}}\par
    \nobreak
    \endgroup
  \fi}
\makeatother

%---------------------------------------

% % % tikz
\usepackage{tikz}
\usetikzlibrary{matrix,chains,shapes,arrows,scopes,positioning}
%\usetikzlibrary{snakes}
%\usetikzlibrary{decorations.markings,decorations.pathreplacing}
%\usetikzlibrary{arrows,arrows.meta}
%\usetikzlibrary{shapes.geometric}
%\input{pgflibraryarrows.new.code.tex}
%\usetikzlibrary{patterns}
%\tikzstyle{empty}=[circle,draw=black!80,thick]
%\tikzstyle{emptyn}=[circle,draw=black!80,fill=white,scale=0.5] 
%\tikzstyle{nero}=[circle,draw=black!80,fill=black!80,thick] 
%\usetikzlibrary{positioning, fit, calc}

%---------------------------------------

% % % General commands
\newcommand{\set}[1]{\left\{#1\right\}}

\newcommand{\abs}[1]{\lvert#1\rvert}

\newcommand{\NN}{\mathbb{N}}

% % % Article specific commands
\newcommand{\Part}{\mathcal{P}}
\newcommand{\sym}{\mathfrak{S}}
\newcommand{\alt}{\mathfrak{A}}
\newcommand{\Irr}{\operatorname{Irr}}
\newcommand{\IBr}{\operatorname{IBr}}
\newcommand{\Block}{\operatorname{Bl}}

\newcommand{\res}{\big\downarrow}

% % % Other commands

 				%Things that need to be addressed
 			%Reference to put in
	%Instructions what needs to be added to the text

%==========================================================================================================

\begin{document}

\title{Character degrees in $2$-blocks of $\sym_n$ and $\alt_n$}
%\date{}

\author{Bim Gustavsson}
\address[B.~Gustavsson]{School of Mathematics, Watson Building, University of Birmingham, Edgbaston, Birmingham B15 2TT, UK}
\email{bxg281@bham.ac.uk}

\begin{abstract}
	Let $p$ be an odd prime. We show that for sufficiently large $n$, every $2$-block of $\sym_n$ and $\alt_n$ contains an ordinary irreducible character of degree divisible by $p$. For almost all $2$-blocks of $\alt_n$, we classify whether it contains a rational valued ordinary irreducible character of degree divisible by $p$.
\end{abstract}

\maketitle

%========================================================================
\section{Introduction}
	A recent milestone in the representation theory of finite groups is the resolution of Brauer's Height Zero Conjecture, whose proof was completed in \cite{MNST24}. A corollary of this is that we can determine whether a $q$-block $B$ of a finite group $G$ contains an irreducible character $\chi$ whose degree is divisible by $q$. A generalisation of this corollary is the following: For primes $p\neq q$, when does a $q$-block of a finite group $G$ contain an irreducible character whose degree is divisible by $p$? In \cite{GMV19} the authors answer this question when $B$ is the principal block of a finite group $G$. For $q=2$ and $p$ odd, \cite[Theorem A]{GMV19} describes when the principal $q$-block of $G$ contains an irreducible character whose degree is even. If $G$ is the symmetric group $\sym_n$ or the alternating group $\alt_n$, then \cite[Theorem C]{GMV19} states that if $n\geq 5$ and $p<q\leq n$ are primes, then the principal $q$-block of $G$ contains an irreducible character whose degree is divisible by $p$. We will now focus our attention on the symmetric and alternating groups. Let $G\in\set{\sym_n,\alt_n}$ for some integer $n\geq q$. If $n\geq 5$ and $p<q\leq n$, then \cite[Theorem C]{GMV19} tells us that the principal $q$-block of $G$ contains an irreducible character whose degree is divisible by $p$. In turn, \cite[Theorem A]{GMei21} generalises \cite[Theorem C]{GMV19} by removing the condition that $p<q$. Furthermore, \cite[Theorem B]{GMei21} classify when the principal block of $\alt_n$ contains an irreducible and rational valued $p$-divisible character. The case of irreducible characters in non-principal $q$-blocks whose degree is divisible by $p$ was investigated in \cite{GMec24} for $p=2$ and odd primes $q$. In this article we continue this line of investigation, by resolving the question in the case of non-principal $q$-blocks of $\sym_n$ and $\alt_n$ for $q=2$ and $p$ any odd prime. 
	
	Let $q$ be a prime. For a finite group $G$, we denote by $\Irr(G)$ the complete set of irreducible complex characters of $G$ and let $\IBr(G)$ denote the irreducible Brauer characters of $G$ in characteristic $q$. The set $\Irr(G)\cup\IBr(G)$ can be divided up into certain equivalence classes, called \emph{$q$-blocks}, for details see \cite{N98}. We denote the set of all $q$-blocks of $G$ by $\Block_q(G)$ and for $B\in \Block_q(G)$ let $\Irr(B)=B\cap\Irr(G)$. The $q$-block that contains the trivial character is called the \emph{principal $q$-block}. For $t\in\NN$, we say that a character $\chi$ of $G$ is \emph{$t$-divisible} if $t\mid \chi(1)$. Before we state our main theorem, we need to briefly introduce blocks of $\sym_n$ and $\alt_n$. For a detailed description of the $2$-blocks of $\sym_n$ and of $\alt_n$, see \cref{sec:prelims} and \cref{sec:deg in alt} respectively. Let $B_c$ denote the $2$-block of $\sym_n$ that is labelled by the unique $2$-core partition $\gamma_c$ with $c$ parts, that is $\gamma_c = (c,c-1,\dots,1)$ if $c\in \NN$ and $\gamma_0=\varnothing$. For example, $B_0$ and $B_1$ are the principal $2$-blocks of $\sym_n$ when $n$ is even or odd respectively. Our main theorem is as follows:
	
	\begin{theorem}\label{thm: char in block}
		Let $p$ be an odd prime, $n\geq p$ be a natural number and let $r,a,k\in\NN_0$ be such that $n=ap^k+r$, $a < p$ and $r<p^k$. Let $G\in\set{\sym_n,\alt_n}$ and let $B\in\Block_2(G)$ be a $2$-block of $G$. If $G=\sym_n$ then suppose that $B=B_c$ and if $G=\alt_n$ then suppose that $B$ is a $2$-block of $\alt_n$ covered by the $2$-block $B_c$ of $\sym_n$. Suppose that either 
		\begin{enumerate}
			\item $2 \leq c\leq 10$ and $n\geq 66$, or \label{1}
			\item $c\geq 11$ and $p^k\geq 2c-1$, \label{2}
		\end{enumerate}
		then there exists some $p$-divisible $\chi\in \Irr(B)$. Moreover, if $G = \alt_n$ and either (\ref{1}) or (\ref{2}) holds, then $\Irr(B)$ contains a rational valued $p$-divisible character if and only if $w(B)>0$.
	\end{theorem}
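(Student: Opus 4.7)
The plan is to split the argument into two stages. The first constructs an explicit $p$-divisible character in the $\sym_n$-block $B_c$; the second uses Clifford theory to pass to $\alt_n$ and refine to rational-valued characters.

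For $G = \sym_n$, by Nakayama's theorem the elements of $\Irr(B_c)$ are exactly the characters $\chi^\lambda$ with $\lambda \vdash n$ of $2$-core $\gamma_c$. Setting $w := w(B_c) = (n - |\gamma_c|)/2$, the plan is to exhibit one such $\lambda$ with
\[
v_p\bigl(\chi^\lambda(1)\bigr) = v_p(n!) - \sum_{(i,j) \in \lambda} v_p\bigl(h_{ij}(\lambda)\bigr) > 0.
\]
When $w \geq 1$, a natural candidate is $\lambda = (c + 2w,\, c - 1, c - 2, \ldots, 1)$, obtained by stacking $w$ horizontal dominoes onto the first row of $\gamma_c$; a short abacus check confirms its $2$-core is indeed $\gamma_c$. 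Under case (2) the bound $p^k \geq 2c - 1$ ensures the staircase fits strictly inside a single $p^k$-window of columns, so the first-row hooks form a near-arithmetic progression of length $c + 2w$ whose $p$-adic valuations are controlled: there is a specific column at which a hook divisible by $p^k$ but not by $p^{k+1}$ appears, and combining this with the contribution of the staircase hooks yields $v_p(\text{hook product}) < v_p(n!) = (n - s_p(n))/(p-1)$, the final inequality following from the decomposition $n = a p^k + r$ with $a < p$ and $r < p^k$. Case (1) is handled by the same template, with $n \geq 66$ ensuring enough padding below the small staircase, possibly after finitely many checks for borderline $w$. When $w = 0$ (which can only occur under case (2)), the unique character is $\chi^{\gamma_c}$ itself, and one checks directly that the bound $p^k \geq 2c - 1$ makes the hook-length inequality still hold, so $v_p(\chi^{\gamma_c}(1)) > 0$.

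For $G = \alt_n$, Clifford theory gives that whenever $\lambda \neq \lambda'$ the restriction $\chi^\lambda\res_{\alt_n}$ is irreducible and inherits rationality from $\chi^\lambda$. The candidate $(c + 2w, c-1, \ldots, 1)$ produced in the first stage has first row of length $c + 2w$, strictly greater than its number of rows $c$ whenever $w \geq 1$, so it is non-self-conjugate; its restriction then lies in the $\alt_n$-block covered by $B_c$, is rational-valued, and has degree $\chi^\lambda(1)$ divisible by $p$, establishing the "if" direction. Conversely, if $w(B) = 0$, then $\Irr(B_c) = \{\chi^{\gamma_c}\}$, and since the staircase $\gamma_c$ is self-conjugate, $\chi^{\gamma_c}\res_{\alt_n}$ splits into two Galois conjugate irreducibles that are not rational-valued (their values on the conjugacy class labelled by the principal hooks of $\gamma_c$ differ by an irrational square root). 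Each forms its own defect-zero $2$-block of $\alt_n$, so the block covered by $B_c$ contains a single non-rational character, precluding the existence of any rational-valued $p$-divisible character.

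The main obstacle is the $p$-adic hook-length analysis in the first stage: hook lengths of partitions with a staircase core interact subtly when dominoes are added, and one must ensure that no hook of $p$-valuation $\geq k+1$ is inadvertently introduced, which would absorb the excess $p$-power in $v_p(n!)$. The split between cases (1) and (2) is calibrated precisely for this. When $c \geq 11$ and $p^k \geq 2c - 1$, the staircase sits strictly inside a single $p^k$-window of columns, so the first-row hooks separate cleanly into a "staircase-perturbed" piece and a "periodic" piece, enabling a uniform estimate. When $c \leq 10$, the staircase is small enough that $n \geq 66$ provides adequate padding for the analogous argument, though the resulting bookkeeping is less structurally clean. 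I expect the formal proof to proceed via abacus or $p$-quotient manipulations, reducing the divisibility condition to a base-$p$ digit-sum inequality that follows directly from the decomposition $n = a p^k + r$.
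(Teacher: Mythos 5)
Your candidate partition $\lambda=(c+2w,\,c-1,\dots,1)$ does not work in general, and this breaks the first (and main) stage of the argument. For $c=2$ it is $\lambda=(n-1,1)$, whose degree is $n-1$; taking $p=3$ and $n=69$ (so $c=2$, $n\geq 66$, hypothesis (1) holds) gives $\chi^{(68,1)}(1)=68=2^2\cdot 17$, which is not divisible by $3$. One can also see the failure through the criterion the paper actually uses (\cref{prop: core doesnt fit}, due to Macdonald): here $69=2\cdot 3^3+15$, and the $27$-core of $(68,1)$ is $(14,1)$ of size exactly $r=15$ with $\chi^{(14,1)}(1)=14$ prime to $3$, so $\chi^{(68,1)}$ is not $3$-divisible. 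The underlying problem is that piling all of the excess $2w$ onto one row leaves you no control over the residue of the long first-row bead modulo $p^k$, so the $p^k$-core can collapse to something of size $\leq r$. The paper avoids this by building the $\beta$-set adaptively: all but one bead is placed at an odd position below $p^k$, the largest bead is capped at $a p^k-\sigma_a$, and the leftover size is absorbed by one intermediate bead, with a case analysis (eight subcases) to ensure the resulting set is a genuine $\beta$-set with $2$-core $\gamma_c$ and $p^k$-core of size $>r$. Your sketched hook-length estimate ("a specific column at which a hook divisible by $p^k$ but not $p^{k+1}$ appears \dots yields $v_p(\text{hook product})<v_p(n!)$") is not carried out, and cannot be for this candidate since the conclusion is false.

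The $\alt_n$ stage follows the paper's route (Clifford theory, non-self-conjugacy when $w>0$, splitting of $\chi^{\gamma_c}$ when $w=0$), but two points need repair. First, your witness of a rational $p$-divisible character in $b_c$ is the restriction of the same defective $\lambda$, so it inherits the gap above; the paper instead records (\cref{rmk: const of X}) that its constructed partitions are automatically non-self-conjugate whenever $w(B_c)>0$. Second, you assert that the two constituents of $\chi^{\gamma_c}\res_{\alt_n}$ "differ by an irrational square root" without justification. When $\gamma_c(\sigma)=+1$ the quantity under the root is the product of the principal hook lengths of $\gamma_c$, an arithmetic progression $\prod_i(4i+r)$ with $r=\pm1$, and showing this is never a perfect square is a genuine number-theoretic step; the paper proves it (\cref{lem: irr}) using Nagura's theorem on primes in $(p,4p/3)$. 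This must be supplied, not assumed.
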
	

	\begin{corollary}\label{cor: N}
		Let $p$ be an odd prime, $n\geq p$ be a natural number and let $G\in\set{\sym_n,\alt_n}$. If $n\geq 8p^2+2p-4$, then every $2$-block of $G$ contains a $p$-divisible irreducible character.
	\end{corollary}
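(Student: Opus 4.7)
The plan is to deduce \cref{cor: N} from \cref{thm: char in block} by a case analysis on the $2$-core label $c$, handling the principal blocks separately via earlier results. The principal $2$-blocks of $\sym_n$ and $\alt_n$ correspond to $c \in \set{0,1}$ and lie outside the hypotheses of \cref{thm: char in block}; for these I would simply invoke \cite[Theorem A]{GMei21}, which produces a $p$-divisible irreducible character in each principal $2$-block whenever $n \geq 5$. This holds automatically under our hypothesis $n \geq 8p^2 + 2p - 4 \geq 74$.

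For a non-principal block --- that is $B = B_c$ when $G = \sym_n$, or a $2$-block of $\alt_n$ covered by such a $B_c$, with $c \geq 2$ --- I would verify that one of the hypotheses (\ref{1}), (\ref{2}) of \cref{thm: char in block} holds. If $2 \leq c \leq 10$, then (\ref{1}) is immediate since $n \geq 74 > 66$ for every odd prime $p$. The only substantive case is therefore $c \geq 11$, where I would need to check $p^k \geq 2c - 1$.

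For this, I would argue by contradiction. The existence of the block $B_c$ in $\sym_n$ forces $c(c+1)/2 \leq n$, because the $2$-core $\gamma_c = (c, c-1, \dots, 1)$ must fit inside some partition of $n$; and the defining relations $a \leq p - 1$, $r < p^k$ together give $n < p^{k+1}$. If $p^k < 2c - 1$, then combining these yields
\[
p^{2k} < 4c^2 \leq 4c(c+1) \leq 8n < 8p^{k+1},
\]
whence $p^{k-1} < 8$. For $p \geq 11$ this forces $k \leq 1$ and so $n < p^2 < 8p^2 + 2p - 4$, a contradiction. For $p \in \set{3,5,7}$ it forces $k \leq 2$, so $n < p^3$; but $p^3 < 8p^2 + 2p - 4$ holds for each of these three primes by direct calculation, again contradicting the hypothesis. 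Therefore $p^k \geq 2c - 1$, condition (\ref{2}) applies, and \cref{thm: char in block} supplies the desired $p$-divisible irreducible character. The corollary is thus essentially a bookkeeping consequence of the main theorem; the only mildly delicate point is the arithmetic check $p^3 < 8p^2 + 2p - 4$ for the small primes $p \in \set{3,5,7}$, which accounts for the precise shape of the bound on $n$.
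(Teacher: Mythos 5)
Your proposal is correct, and the overall strategy is the same as the paper's: reduce everything to \cref{thm: char in block} by checking that every block label $c$ falls under hypothesis (\ref{1}) or (\ref{2}), the only real work being the verification of $p^k\geq 2c-1$ for $c\geq 11$. The arithmetic is carried out differently, though. The paper argues directly: it sets $\tilde{c}=\max\set{c\in\NN : \frac{c(c+1)}{2}\leq n}$, uses $n\geq 8p^2+2p-4$ to deduce $\tilde{c}+1>4p$, and then chains $p^k\geq \frac{n}{p}\geq\frac{\tilde{c}(\tilde{c}+1)}{2p}\geq 2\tilde{c}>2\tilde{c}-1$, which covers all $c\leq\tilde{c}$ at once. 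You instead argue by contradiction from $p^k<2c-1$, obtaining $p^{2k}<8p^{k+1}$ and hence $p^{k-1}<8$, and then split into $p\geq 11$ versus $p\in\set{3,5,7}$; both routes use exactly the same three inputs ($\frac{c(c+1)}{2}\leq n$, $n<p^{k+1}$, $n\geq 8p^2+2p-4$), so this is a matter of bookkeeping rather than a new idea, and your version is equally valid. One point in your favour: you explicitly dispose of the principal blocks $c\in\set{0,1}$, which lie outside the hypotheses of \cref{thm: char in block}, by citing \cite[Theorem A]{GMei21}; the paper's proof of the corollary starts at $c=2$ and leaves this case implicit even though the corollary claims the conclusion for \emph{every} $2$-block, so your treatment is the more complete of the two.
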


	The structure of this article is as follows: In \cref{sec:prelims} we recall necessary facts from the representation theory of symmetric groups and related combinatorics. In \cref{sec:deg in sym} we prove \cref{thm: char in block} and \cref{cor: N} for $G=\sym_n$. We also make an important observation about the proof of \cref{thm: main part A} needed for the proof of \cref{thm: main part A} when $G=\alt_n$. We begin \cref{sec:deg in alt} by introducing necessary facts about the representation theory of alternating groups and its block decomposition. We then proceed to prove \cref{thm: char in block} and \cref{cor: N} for $G=\alt_n$.

\bigskip

%========================================================================
\section{Preliminaries}\label{sec:prelims}
	For $n\in \NN$. A \emph{partition} $\lambda$ of $n$ is a weakly decreasing sequence of non-negative integers $(\lambda_1,\lambda_2, \dots ,\lambda_t)$ such that $\sum_{i=1}^t \lambda_i = n$, and we denote the set of all partitions of $n$ by $\Part(n)$. We call $n$ the \emph{size} of $\lambda$, which we denote by $\abs{\lambda}=n$ and we call $\lambda_1,\dots,\lambda_t$ the \emph{parts} of $\lambda$. The number of non-zero parts of $\lambda$ is called the \emph{length} of $\lambda$ and is denoted by $\ell(\lambda)$. The \emph{conjugate} of $\lambda$ is the partition $\lambda':= (\mu_1, \mu_2, \dots, \mu_{\ell(\lambda')})$ where $\mu_i = \abs{\set{j\in \NN \mid \lambda_j \geq i}}$. We say that $\lambda$ is \emph{self-conjugate} if $\lambda=\lambda'$. For $s,t\in\NN_0$, let $[s,t] := \set{s,s+1,\dots,t}$ if $s\leq t$ and $[s,t]:=\varnothing$ otherwise.
	
	\subsection{Representation theory of the symmetric groups}
		For further background on the content of this section, we refer the reader to \cite{O94} and \cite{N98}. Let $n\in \NN$. We denote by $\sym_n$ the symmetric group on $n$ objects. It is well known that $\Irr(\sym_n)$ is in natural bijection with $\Part(n)$ and we denote by $\chi^\lambda$ the irreducible character of $\sym_n$ corresponding to the partition $\lambda\in\Part(n)$. The \emph{Young diagram} of $\lambda$ is defined to be the set
			\[ Y(\lambda) := \set{(i,j) \in \NN \times \NN \mid 1\leq i \leq \ell(\lambda),\ 1 \leq j \leq \lambda_i}. \]
		For each $(i,j)\in Y(\lambda)$ we can associate a \emph{hook} of $\lambda$ at $(i,j)\in Y(\lambda)$ which is defined as:
			\[ H_{i,j}(\lambda) := \set{(i,j')\in Y(\lambda) \mid j' \geq j} \cup \set{(i',j) \in Y(\lambda) \mid i' \geq i} \]
		and we let $h_{i,j}(\lambda) = \abs{H_{i,j}(\lambda)}$. We say that $H$ is a \emph{hook of $\lambda$} if $H=H_{i,j}(\lambda)$ for some $(i,j)\in Y(\lambda)$. For $(i,j)\in Y(\lambda)$, the \emph{rim hook of $H_{i,j}(\lambda)$} is the set
			\[ R_{i,j}(\lambda) := \set{ (x+t,y+t)\in Y(\lambda) \mid (x,y)\in H_{i,j}(\lambda)\ \text{and}\ (x+t+1,y+t+1)\not\in Y(\lambda)}.\]
		Let $e\in \NN$. If $e = h_{i,j}(\lambda)$ for some $(i,j)\in Y(\lambda)$, then we say that $\lambda$ has an $e$-hook. We denote by $\lambda\setminus H_{i,j}(\lambda)$ the partition of size $n-e$ which satisfies
			\[ Y(\lambda\setminus H_{i,j}(\lambda)) = Y(\lambda)\setminus R_{i,j}(\lambda). \]
		
		A partition is called an \emph{$e$-core} if it has no $e$-hooks. If $\lambda$ is not an $e$-core, we can obtain the $e$-core of $\lambda$ by successively removing $e$-hooks until no more $e$-hooks can be removed. It is well known that the order in which we remove $e$-hooks from $\lambda$ does not change the resulting $e$-core, see for example \cite[Theorem 2.7.16]{JK81}. We denote the $e$-core of $\lambda$ by $C_e(\lambda)$. An example of these concepts are illustrated in \cref{fig: partition hook removal}.

		\begin{figure}[H]
		\centering
		\captionsetup{justification=centering}
			\begin{subfigure}[t]{0.15\textwidth}
    	\begin{tikzpicture}[scale=0.4,every node/.style={scale=.8}]
        %highlight in diagram
        \filldraw[draw = black, fill = orange] (3,0) -- (5,0) -- (5,-1) -- (4,-1) -- (4,-2) -- (2,-2) -- (2,-1) -- (3,-1) -- (3,0) ;
        %horisontal lines for diagram
        \draw (0,0) -- (5,0);
        \draw (0,-1) -- (5,-1);
        \draw (0,-2) -- (4,-2);
        \draw (0,-3) -- (2,-3);
        \draw (0,-4) -- (2,-4);
        \draw (0,-5) -- (1,-5); 
        %vertical lines for diagram
        \draw (0,0) -- (0,-5);
        \draw (1,0) -- (1,-5);
        \draw (2,0) -- (2,-4);
        \draw (3,0) -- (3,-2);
        \draw (4,0) -- (4,-2);
        \draw (5,0) -- (5,-1);
				\draw (2.5,-0.5) node[] {$4$};
      \end{tikzpicture}
			\caption{}
			\end{subfigure}
			~
			\begin{subfigure}[t]{0.15\textwidth}
     \begin{tikzpicture}[scale=0.4,every node/.style={scale=.8}]
        \draw [dotted] (3,0) -- (5,0) -- (5,-1) -- (4,-1) -- (4,-2) -- (2,-2) -- (2,-1) -- (3,-1) -- (3,0) ;
				%horizontal lines for diagram
        \draw (0,0) -- (3,0);
        \draw (0,-1) -- (3,-1);
        \draw (0,-2) -- (2,-2);
        \draw (0,-3) -- (2,-3);
        \draw (0,-4) -- (2,-4);
        \draw (0,-5) -- (1,-5); 
        %vertical lines for diagram
        \draw (0,0) -- (0,-5);
        \draw (1,0) -- (1,-5);
        \draw (2,0) -- (2,-4);
        \draw (3,0) -- (3,-1);
        % nodes to label boxes
      \end{tikzpicture}
			\caption{}
		\end{subfigure}
		\begin{subfigure}[t]{0.15\textwidth}
			\begin{tikzpicture}[scale=0.4,every node/.style={scale=.8}]
        %highlight in diagram
        \draw [dotted] (3,0) -- (5,0) -- (5,-1) -- (4,-1) -- (4,-2) -- (2,-2) -- (2,-1) -- (3,-1) -- (3,0) ;
				\filldraw[draw = black, fill = SeaGreen] (1,-2) -- (2,-2) -- (2,-4) -- (1,-4) -- (1,-5) -- (0,-5) -- (0,-3) -- (1,-3);
        %horisontal lines for diagram
       \draw (0,0) -- (3,0);
        \draw (0,-1) -- (3,-1);
        \draw (0,-2) -- (2,-2);
        \draw (0,-3) -- (2,-3);
        \draw (0,-4) -- (2,-4);
        \draw (0,-5) -- (1,-5); 
        %vertical lines for diagram
        \draw (0,0) -- (0,-5);
        \draw (1,0) -- (1,-5);
        \draw (2,0) -- (2,-4);
        \draw (3,0) -- (3,-1);
				%nodes to label
				\draw (0.5,-2.5) node[] {$4$};
      \end{tikzpicture}
			\caption{}
			\end{subfigure}
			~
			\begin{subfigure}[t]{0.15\textwidth}
      \begin{tikzpicture}[scale=0.4,every node/.style={scale=.8}]
        \draw [dotted] (3,0) -- (5,0) -- (5,-1) -- (4,-1) -- (4,-2) -- (2,-2) -- (2,-1) -- (3,-1) -- (3,0);
				\draw [dotted] (0,-3) -- (0,-5) -- (1,-5) -- (1,-4) -- (2,-4) -- (2,-2);
        \draw (0,0) -- (3,0);
        \draw (0,-1) -- (3,-1);
        \draw (0,-2) -- (2,-2);
        \draw (0,-3) -- (1,-3);
        %vertical lines for diagram
        \draw (0,0) -- (0,-3);
        \draw (1,0) -- (1,-3);
        \draw (2,0) -- (2,-2);
        \draw (3,0) -- (3,-1);
        % nodes to label boxes
			\end{tikzpicture}
			\caption{}
		\end{subfigure}
		\caption{\\
			(A) Partition $\lambda = (5,4,2^2,1)$ with rim hook $R_{1,3}(\lambda)$ highlighted.  \\
			(B) Removal of $R_{1,3}(\lambda)$ from $\lambda$, resulting in partition $\mu:= (3,2^3,1)$.  \\
			(C) Partition $\mu=(3,2^3,1)$ with rim hook $R_{3,1}(\mu)$ highlighted. \\
			(D) Removal of $R_{3,1}(\mu)$ from $\mu$ resulting in partition $\nu:= (3,2,1)$. In particular, we have that $C_4(\lambda)=\nu$.} 
			\label{fig: partition hook removal}
		\end{figure}
		This hook removal procedure will play an important role in this paper, and so we will now introduce some necessary theory with respect to this.
		
	\subsection{$\beta$-sets and cores}
		A \emph{$\beta$-set} $X$ is a non-empty finite subset of $\NN_0$. We fix the convention that $x_1 > x_2 > \dots > x_t$ whenever we write $X = \set{x_1,\dots,x_t}$. Given a $\beta$-set $X$ we can associate to it a partition defined by 
			\[ P(X):=(x_1 - (t-1),\, x_{2} - (t-2),\,\dots,\,x_t).\] 
	 	From the definition of $P(X)$ is clear that $\abs{P(X)} = x_1+\dots+x_t - \frac{t(t-1)}{2}$. Note that there is not a bijection between $\beta$-sets and partitions. For example, if $X=\set{9,7,6,3,1}$ and $Y=\set{10,8,5,4,2,0}$ then $P(X)=P(Y)=(5,4,2^2,1)$. Given a partition $\lambda$, we say that \emph{$X$ is a $\beta$-set of $\lambda$} if $P(X)=\lambda$.
		
		\begin{proposition}\cite[Corollary 1.5, Proposition 1.8]{O94}\label{prop: remove hook}
			Let $\lambda$ be a partition, $X$ a $\beta$-set of $\lambda$ and let $h\in \NN$. Then $\lambda$ contains a hook $H$ of size $h$ if and only if there exists $x,y\in \NN_0$ such that $x\in X$, $y\not\in X$ and $h=x-y$. Whenever this holds, $(X\setminus\set{x})\cup\set{y}$ is a $\beta$-set for $\lambda\setminus H$.
		\end{proposition}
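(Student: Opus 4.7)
The natural approach is via the classical bead-abacus picture: identify $X = \set{x_1 > x_2 > \dots > x_t}$ with a configuration of beads at positions $\NN_0$, and show that hooks of $\lambda := P(X)$ correspond bijectively to pairs $(x, y)$ with $x \in X$ and $y \in \NN_0 \setminus X$ with $y < x$, where the hook length is $x - y$. Under this correspondence, sliding the bead $x$ into the empty slot $y$ realises rim hook removal on $\lambda$.

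First I would analyse hook lengths one row at a time. Fix $1 \leq i \leq t$. Since $X \cap [0, x_i - 1] = \set{x_{i+1}, \dots, x_t}$ has $t-i$ elements, the complement $[0, x_i - 1] \setminus X$ has exactly $x_i - (t-i) = \lambda_i$ elements, matching the number of hooks in row $i$. Using the formula $h_{i,j}(\lambda) = \lambda_i - j + \lambda'_j - i + 1$, a direct computation shows that $j \mapsto x_i - h_{i,j}(\lambda)$ is a bijection from $[1, \lambda_i]$ onto $[0, x_i - 1] \setminus X$. Summing over $i$ yields both directions of the biconditional in the statement.

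For the rim hook removal, fix a hook $H = H_{i,j}(\lambda)$ of size $h = x_i - y$ and set $X' := (X \setminus \set{x_i}) \cup \set{y}$. Let $k \in [i, t]$ be the unique index with $x_k > y > x_{k+1}$ (using the conventions $x_0 = +\infty$, $x_{t+1} = -1$); note $k \geq i$ since $y < x_i$. Sorting $X'$ in decreasing order and applying $P$ produces the partition $\mu$ with $\mu_j = \lambda_j$ for $j < i$ or $j > k$, with $\mu_j = \lambda_{j+1} - 1$ for $i \leq j \leq k - 1$, and with $\mu_k = y - (t-k)$. A row-by-row comparison then shows that $Y(\lambda) \setminus Y(\mu)$ consists, in each row $j \in [i, k-1]$, of the cells with column index in $[\lambda_{j+1}, \lambda_j]$, together with the cells in row $k$ of column index in $[y - (t-k) + 1, \lambda_k]$. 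Consecutive rows overlap in exactly one column, so the skew shape is a connected rim strip, and a brief telescoping calculation confirms its total length is $x_i - y = h$. Since $R_{i,j}(\lambda)$ is the unique rim hook of length $h$ whose top-right cell lies in row $i$, this strip must equal $R_{i,j}(\lambda)$, giving $P(X') = \lambda \setminus H$.

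The only real obstacle is the book-keeping in the row-by-row check, in particular the boundary case $k = i$, where the sum over $j \in [i, k-1]$ is empty and the rim hook degenerates to a purely horizontal strip in row $i$; one should also verify that $\mu$ is a genuine partition at the boundaries $j = i - 1$ and $j = k$, which reduces to the inequalities $y \leq x_k - 1$ and $y \geq x_{k+1} + 1$ built into the choice of $k$. Since the proposition is classical and reproduced from \cite[Corollary 1.5, Proposition 1.8]{O94}, the exposition can be kept brief, serving primarily to fix notation for the remainder of the paper.
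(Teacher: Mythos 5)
The paper does not prove this proposition at all --- it is quoted directly from Olsson \cite[Corollary 1.5, Proposition 1.8]{O94} --- and your argument is precisely the standard first-column-hook-length/abacus proof given there: counting $[0,x_i-1]\setminus X$ against the hooks in row $i$, and checking that sliding a bead from $x$ to $y$ produces the skew rim strip of length $x-y$. Your outline is correct (the telescoping length count and the identification of the strip with $R_{i,j}(\lambda)$ both check out), so there is nothing to add beyond the citation.
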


		Let $e\in \NN$. For a partition $\lambda$, it is not clear whether removing $e$-hooks from $\lambda$ in a different order gives the same $e$-core. However, this becomes easy to see if we represent it on \emph{James' abacus}. To be more precise, \emph{James' $e$-abacus} consists of $e$ vertical runners (columns) labelled $0,\, 1,\, \dots,\, e-1$ from left to right, containing infinitely many rows labelled by $\NN_0$ from top to bottom. Let $x\in[0,e-1]$ and $y\in\NN_0$, then $(x,y)$ labels the position in column $x$ and row $y$. Now, the configuration of $X$ on the $e$-abacus is as follows: for each $b\in X$ place a bead at the unique position $(x,y)$ on the $e$-abacus which satisfies $b=x+ey$. In this setting, manipulations of the beads on the $e$-abacus will correspond to changes to the corresponding partition. For example, if $b\in X$, $b-e\not\in B$ and $b-e\geq 0$, then we know that replacing $b$ with $b-e$ in $X$ corresponds to removing an $e$-hook from the partition $P(X)$ by \cref{prop: remove hook}. On the $e$-abacus, this is equivalent to moving bead $b$ one step up on its runner. Hence, moving all the beads up in an $e$-abacus configuration of a $\beta$-set $X$ as high up as possible on its runner gives a new $\beta$-set $Y$ such that $P(Y)$ is the $e$-core of the partition $P(X)$. For details see \cite{O94}. 
		
		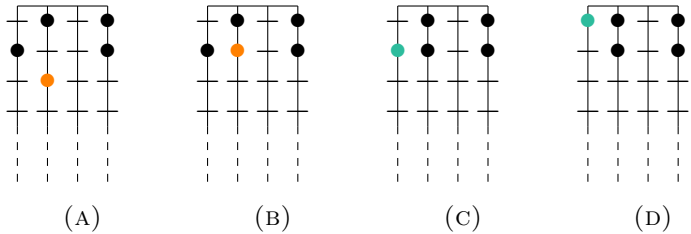
\begin{figure}[H]
  	\begin{subfigure}[t]{.15\textwidth}
			\begin{tikzpicture}[scale=0.4,every node/.style={scale=1.2}]
				\draw (0,0) -- (3,0);
        %runners for the abacus
        \draw (0,0) -- (0,-4);
				\draw[dashed] (0,-4) -- (0,-6);
        \draw (1,0) -- (1,-4);
				\draw[dashed] (1,-4) -- (1,-6);
        \draw (2,0) -- (2,-4);
				\draw[dashed] (2,-4) -- (2,-6);
        \draw (3,0) -- (3,-4);
				\draw[dashed] (3,-4) -- (3,-6);
				\draw (0,-0.5) node[] {$-$};
				\draw (1,-0.5) node[] {$\bullet$};
				\draw (2,-0.5) node[] {$-$};
				\draw (3,-0.5) node[] {$\bullet$};
				\draw (0,-1.5) node[] {$\bullet$};
				\draw (1,-1.5) node[] {$-$};
				\draw (2,-1.5) node[] {$-$};
				\draw (3,-1.5) node[] {$\bullet$};
				\draw (0,-2.5) node[] {$-$};
				\draw (1,-2.5) node[orange] {$\bullet$};
				\draw (2,-2.5) node[] {$-$};
				\draw (3,-2.5) node[] {$-$};
				\draw (0,-3.5) node[] {$-$};
				\draw (1,-3.5) node[] {$-$};
				\draw (2,-3.5) node[] {$-$};
				\draw (3,-3.5) node[] {$-$};
      \end{tikzpicture}
			\caption{}
		\end{subfigure}
		~
		\begin{subfigure}[t]{.15\textwidth}
    	\begin{tikzpicture}[scale=0.4,every node/.style={scale=1.2}]
				\draw (0,0) -- (3,0);
        %runners for the abacus
        \draw (0,0) -- (0,-4);
				\draw[dashed] (0,-4) -- (0,-6);
        \draw (1,0) -- (1,-4);
				\draw[dashed] (1,-4) -- (1,-6);
        \draw (2,0) -- (2,-4);
				\draw[dashed] (2,-4) -- (2,-6);
        \draw (3,0) -- (3,-4);
				\draw[dashed] (3,-4) -- (3,-6);
				\draw (0,-0.5) node[] {$-$};
				\draw (1,-0.5) node[] {$\bullet$};
				\draw (2,-0.5) node[] {$-$};
				\draw (3,-0.5) node[] {$\bullet$};
				\draw (0,-1.5) node[] {$\bullet$};
				\draw (1,-1.5) node[orange] {$\bullet$};
				\draw (2,-1.5) node[] {$-$};
				\draw (3,-1.5) node[] {$\bullet$};
				\draw (0,-2.5) node[] {$-$};
				\draw (1,-2.5) node[] {$-$};
				\draw (2,-2.5) node[] {$-$};
				\draw (3,-2.5) node[] {$-$};
				\draw (0,-3.5) node[] {$-$};
				\draw (1,-3.5) node[] {$-$};
				\draw (2,-3.5) node[] {$-$};
				\draw (3,-3.5) node[] {$-$};
      \end{tikzpicture}
			\caption{}
		\end{subfigure}
		~
		\begin{subfigure}[t]{.15\textwidth}
      \begin{tikzpicture}[scale=0.4,every node/.style={scale=1.2}]
				\draw (0,0) -- (3,0);
        %runners for the abacus
        \draw (0,0) -- (0,-4);
				\draw[dashed] (0,-4) -- (0,-6);
        \draw (1,0) -- (1,-4);
				\draw[dashed] (1,-4) -- (1,-6);
        \draw (2,0) -- (2,-4);
				\draw[dashed] (2,-4) -- (2,-6);
        \draw (3,0) -- (3,-4);
				\draw[dashed] (3,-4) -- (3,-6);
				\draw (0,-0.5) node[] {$-$};
				\draw (1,-0.5) node[] {$\bullet$};
				\draw (2,-0.5) node[] {$-$};
				\draw (3,-0.5) node[] {$\bullet$};
				\draw (0,-1.5) node[SeaGreen] {$\bullet$};
				\draw (1,-1.5) node[] {$\bullet$};
				\draw (2,-1.5) node[] {$-$};
				\draw (3,-1.5) node[] {$\bullet$};
				\draw (0,-2.5) node[] {$-$};
				\draw (1,-2.5) node[] {$-$};
				\draw (2,-2.5) node[] {$-$};
				\draw (3,-2.5) node[] {$-$};
				\draw (0,-3.5) node[] {$-$};
				\draw (1,-3.5) node[] {$-$};
				\draw (2,-3.5) node[] {$-$};
				\draw (3,-3.5) node[] {$-$};
      \end{tikzpicture}
			\caption{} 
		\end{subfigure} 
		~
		\begin{subfigure}[t]{.15\textwidth}
      \begin{tikzpicture}[scale=0.4,every node/.style={scale=1.2}]
				\draw (0,0) -- (3,0);
        %runners for the abacus
        \draw (0,0) -- (0,-4);
				\draw[dashed] (0,-4) -- (0,-6);
        \draw (1,0) -- (1,-4);
				\draw[dashed] (1,-4) -- (1,-6);
        \draw (2,0) -- (2,-4);
				\draw[dashed] (2,-4) -- (2,-6);
        \draw (3,0) -- (3,-4);
				\draw[dashed] (3,-4) -- (3,-6);
				\draw (0,-0.5) node[SeaGreen] {$\bullet$};
				\draw (1,-0.5) node[] {$\bullet$};
				\draw (2,-0.5) node[] {$-$};
				\draw (3,-0.5) node[] {$\bullet$};
				\draw (0,-1.5) node[] {$-$};
				\draw (1,-1.5) node[] {$\bullet$};
				\draw (2,-1.5) node[] {$-$};
				\draw (3,-1.5) node[] {$\bullet$};
				\draw (0,-2.5) node[] {$-$};
				\draw (1,-2.5) node[] {$-$};
				\draw (2,-2.5) node[] {$-$};
				\draw (3,-2.5) node[] {$-$};
				\draw (0,-3.5) node[] {$-$};
				\draw (1,-3.5) node[] {$-$};
				\draw (2,-3.5) node[] {$-$};
				\draw (3,-3.5) node[] {$-$};
      \end{tikzpicture}
			\caption{}
		\end{subfigure} 
		\caption{The partitions in \cref{fig: partition hook removal} as seen on a $4$-abacus with $\beta$-sets of size 5. The highlighted bead $b$ in (A) (and (C) respectively) corresponds to the highlighted rim hooks in \cref{fig: partition hook removal} (A) (and (C) respectively). Moving $b$ up one step on its runner yields the abacus (B) (and (D) respectively), which corresponds to removing the highlighted rim hooks in \cref{fig: partition hook removal} (A) (and (C) respectively).} 
		\label{fig: abacus}
		\end{figure}

		Let us now briefly explain the importance of computing cores of partitions. Let $q\leq n$ be a prime. Nakayama's conjecture (which was proved in 1947 by Brauer and Robinson, see \cite{B47} and \cite{R47}) states that $\chi^\lambda,\chi^\mu\in \Irr(\sym_n)$ belong to the same $q$-block if and only if $C_q(\lambda)=C_q(\mu)$, i.e. $q$-blocks of $\sym_n$ are parametrised by $q$-cores. Let $\lambda\in\Part(n)$. The number of $q$-hooks that needs to be removed from $\lambda$ to get to the $q$-core of $\lambda$ is called the \emph{$q$-weight of $\lambda$} and is denoted by $w_q(\lambda)$, or simply $w(\lambda)$ if it is clear from context. Furthermore, suppose $B\in\Block_q(\sym_n)$ is parametrised by the $q$-core partition $\gamma$, then $w(\lambda)=\frac{n-\abs{\gamma}}{q}$ for all $\chi^\lambda\in\Irr(B)$, which we call the \emph{weight} of $B$ and denote it by $w(B)$. We define the partition $\gamma_c := (c,c-1,\dots,1)$ for $c\in \NN$ and $\gamma_0 := \varnothing$. Hence, any $2$-block of $\sym_n$ is parameterised by $\gamma_c$ for some $c\in\NN_0$. 

		For $X$ a $\beta$-set, let $X_0 = \set{x \in X \mid x\ \text{even}}$ and $X_1 = X \setminus X_0$. The following lemma is immediate by considering the configuration of a $\beta$-set $X$ on a $2$-abacus.

		\begin{lemma}\label{lem: in core c}
		Let $X$ be a $\beta$-set. Then $C_2(P(X))=\gamma_c$, where
			\[ c = \begin{cases}
				\abs{X_1} - \abs{X_0} &\text{if}\ \abs{X_1} \geq \abs{X_0},\\
				\abs{X_0} - \abs{X_1}-1 &\text{otherwise}.
			\end{cases}\]
		\end{lemma}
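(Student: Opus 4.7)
My plan is to prove this via direct manipulation on the $2$-abacus, as the comment before the lemma suggests. Place the $\beta$-set $X$ on a $2$-abacus in the manner described in the preamble to the lemma: a bead with label $b\in X$ lies in column $b \bmod 2$ and row $\lfloor b/2 \rfloor$. Then runner $0$ contains exactly $|X_0|$ beads and runner $1$ contains exactly $|X_1|$ beads. By the discussion following \cref{prop: remove hook}, sliding each bead as far up its runner as possible corresponds to successively removing $2$-hooks from $P(X)$, and hence produces a $\beta$-set $Y$ with $P(Y)=C_2(P(X))$. Explicitly,
\[ Y \;=\; \set{0,2,4,\dots,2(|X_0|-1)} \;\cup\; \set{1,3,5,\dots,2|X_1|-1}. \]

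It then remains a matter of computing $P(Y)$. Write $a=|X_0|$, $b=|X_1|$, so $|Y|=a+b$, and use the formula $P(Y)_i = y_i-(a+b-i)$ where $y_1>y_2>\cdots>y_{a+b}$ is the decreasing listing of $Y$. I would split into the two cases according to whether $b\geq a$ or $a>b$.

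In the case $b\geq a$, the top $b-a$ entries of $Y$ are the largest odd beads $2b-1,2b-3,\dots,2a+1$, after which the remaining $2a$ entries are exactly the interval $\set{0,1,\dots,2a-1}$ in decreasing order. Substituting into the formula for $P(Y)_i$, the top $b-a$ parts become $b-a,b-a-1,\dots,1$ and all subsequent parts vanish, so $P(Y)=\gamma_{b-a}$ and $c=|X_1|-|X_0|$ as claimed. In the case $a>b$, symmetrically the top $a-b$ entries are the even beads $2a-2,2a-4,\dots,2b$, followed by the interval $\set{0,1,\dots,2b-1}$ in decreasing order. The same computation now yields the non-zero parts $a-b-1,a-b-2,\dots,1$ (the leading ``$a-b$'' part collapses to $0$ because the largest bead is even rather than odd), so $P(Y)=\gamma_{a-b-1}$ and $c=|X_0|-|X_1|-1$.

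The whole argument is elementary; the only thing worth being careful about is exactly the asymmetry between the two cases, namely the extra $-1$ when $a>b$, which is a direct consequence of the fact that in the ``pushed-up'' abacus the largest bead is $\max(2a-2,2b-1)$, and the parity of this maximum is what controls whether $c$ equals $|X_1|-|X_0|$ or $|X_0|-|X_1|-1$.
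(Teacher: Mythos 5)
Your argument is correct and is precisely the abacus computation that the paper leaves implicit (the paper offers no written proof, asserting the lemma is ``immediate by considering the configuration of $X$ on a $2$-abacus''). Both cases of your explicit evaluation of $P(Y)_i = y_i-(a+b-i)$ check out, including the shift by one in the case $|X_0|>|X_1|$ coming from the largest pushed-up bead being $2|X_0|-2$ rather than an odd number.
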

		
		We have the following relation between $2$-blocks and $\beta$-sets:

		\begin{lemma}\label{lem: even is even}
			Let $X$ be a $\beta$-set, $n=\abs{P(X)}$ and let $t=\abs{X}$. If $B_{t}\in\Block_2(\sym_n)$, then $\abs{X_0}$ is even.
		\end{lemma}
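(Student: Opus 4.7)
The plan is to reduce the statement to a short parity calculation involving $n$, $t$, and $|X_1|$. The hypothesis that $B_t$ is a $2$-block of $\sym_n$ pins down the parity of $n$, while the formula defining $P(X)$ expresses the parity of $n$ in terms of $|X_1|$; comparing the two congruences forces $|X_0|$ to be even.

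In more detail, I would first unpack the hypothesis: $B_t\in\Block_2(\sym_n)$ means that some partition of $n$ has $\gamma_t$ as its $2$-core. Since one passes from any such partition to its $2$-core by successively removing $2$-hooks, and each such removal decreases the size by $2$, we must have $n\equiv |\gamma_t|\pmod{2}$. Using $|\gamma_t|=t(t+1)/2$, this says $n\equiv t(t+1)/2\pmod{2}$.

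Next I would compute $n=|P(X)|$ directly from the formula $|P(X)|=\sum_{x\in X} x-t(t-1)/2$. Splitting the sum according to the partition $X=X_0\sqcup X_1$, the contribution from $X_0$ is even (sum of even integers), and the contribution from $X_1$ is congruent to $|X_1|\pmod{2}$ (sum of $|X_1|$ odd integers), so $n\equiv |X_1|-t(t-1)/2\pmod{2}$.

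Combining the two congruences yields $|X_1|\equiv t(t+1)/2+t(t-1)/2=t^2\equiv t\pmod{2}$, and hence $|X_0|=t-|X_1|$ is even. There is really no obstacle here, as the argument is pure parity bookkeeping; if one preferred a more conceptual approach, one could instead argue via the abacus picture, noting that the $2$-core $\gamma_c$ computed in \cref{lem: in core c} has size $c(c+1)/2$, and that the $2$-weight $(n-c(c+1)/2)/2$ being a non-negative integer—combined with $n\equiv t(t+1)/2\pmod{2}$ from the block hypothesis—produces the same parity relation between $|X_0|$, $|X_1|$, and $t$.
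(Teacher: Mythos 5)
Your proof is correct and follows essentially the same route as the paper: derive $n\equiv t(t+1)/2\pmod 2$ from the block hypothesis, compute $\sum_{x\in X}x=n+t(t-1)/2$ from the definition of $P(X)$, and conclude $\abs{X_1}\equiv t\pmod 2$, hence $\abs{X_0}$ even. The only cosmetic difference is that you combine the two congruences in one step where the paper splits into the cases $t$ even and $t$ odd.
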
 
		
		\begin{proof}
			Since $B_t\in \Block_2(\sym_n)$ we have that $\frac{t(t+1)}{2}\equiv n\ (\text{mod}\ 2)$ and so $n+\frac{t(t+1)}{2}\equiv 0\ (\text{mod}\ 2)$. Furthermore, we have that
				\begin{align}\label{eq: even odd}
					 \sum_{x\in X} x = n + \frac{t(t-1)}{2} = n+\frac{t(t+1)}{2}-t.
				\end{align}
			Now suppose that $t$ is even. Then $\sum_{x\in X} x \equiv 0\ (\text{mod}\ 2)$ by \eqref{eq: even odd} which implies that $\abs{X_1}$ is even. On the other hand, suppose that $t$ is odd. Then $\sum_{x\in X} x \equiv 1\ (\text{mod}\ 2)$ by \eqref{eq: even odd} which in turn implies that $\abs{X_1}$ is odd. Since $t=\abs{X_0}=\abs{X}-\abs{X_1}$ we have that $\abs{X_0}$ is even.
		\end{proof}

		So far we have used $2$-cores to understand the distribution of $\Irr(\sym_n)$ in $2$-blocks of $\sym_n$. We will now consider another useful aspect of cores. The $p$-valuation of the degree of ordinary irreducible characters was described in \cite{M71} in terms of $p$-cores and $p$-quotients. The following proposition is a consequence of this work, and will play a key role in the proof of \cref{thm: char in block}.

		\begin{proposition}\label{prop: core doesnt fit}
			Let $p$ be a prime, $n\geq p$ a natural number and let $\chi^\lambda\in \Irr(\sym_n)$. Let $r,a,k\in\NN_0$ be such that $n=ap^k+r$, $a<p$ and $r<p^k$. Then 
				\[ \chi^\lambda\ \text{is}\ p\text{-divisible if and only if}\ \abs{C_{p^k}(\lambda)} > r\ \text{or}\ \chi^{C_{p^k}(\lambda)}\ \text{is}\ p\text{-divisible}. \]
		\end{proposition}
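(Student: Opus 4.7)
The plan is to combine Macdonald's hook-theoretic description of $v_p(\chi^\lambda(1))$ with the standard iterated-core identity $C_{p^i}(\lambda)=C_{p^i}(C_{p^k}(\lambda))$ whenever $p^i\mid p^k$. First I would derive, from the hook-length formula and Legendre's formula $v_p(n!)=\sum_{i\ge 1}\lfloor n/p^i\rfloor$, the identity
\[
v_p\!\left(\chi^\lambda(1)\right) \;=\; \sum_{i\geq 1}\Big(\big\lfloor n/p^i\big\rfloor - w_{p^i}(\lambda)\Big),
\]
using the Macdonald fact that $\sum_{(i,j)\in Y(\lambda)} v_p(h_{i,j}(\lambda))=\sum_{i\ge 1}w_{p^i}(\lambda)$, which follows because hooks of $\lambda$ of length divisible by $p^i$ correspond bijectively to cells of the $p^i$-quotient, whose total size is $w_{p^i}(\lambda)$.

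I would then observe that each summand above is non-negative. Indeed $|C_{p^i}(\lambda)|\equiv n\pmod{p^i}$ and $|C_{p^i}(\lambda)|\ge 0$ force $|C_{p^i}(\lambda)|\ge n\bmod p^i$, and rearranging $w_{p^i}(\lambda)=(n-|C_{p^i}(\lambda)|)/p^i$ yields $w_{p^i}(\lambda)\le\lfloor n/p^i\rfloor$, with equality exactly when $|C_{p^i}(\lambda)|=n\bmod p^i$. Hence $\chi^\lambda$ is \emph{not} $p$-divisible if and only if $|C_{p^i}(\lambda)|=n\bmod p^i$ for every $i\ge 1$.

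Next, I would unpack this system using $n=ap^k+r$ with $a<p$ and $r<p^k$, which forces $n<p^{k+1}$. For $i\ge k+1$ the condition is automatic, since any hook of $\lambda$ has length $\le n<p^i$ and thus $C_{p^i}(\lambda)=\lambda$; for $i=k$ it reads $|C_{p^k}(\lambda)|=r$. For $i<k$ I would invoke the standard $e$-abacus consequence $C_{p^i}(\lambda)=C_{p^i}(\mu)$, where $\mu:=C_{p^k}(\lambda)$, so the condition becomes $|C_{p^i}(\mu)|=n\bmod p^i$. Under the hypothesis $|\mu|=r$ we have $n\bmod p^i=r\bmod p^i=|\mu|\bmod p^i$, so the $i<k$ equalities become $|C_{p^i}(\mu)|=|\mu|\bmod p^i$; moreover for $i\ge k$ the analogous condition on $\mu$ holds trivially since $|\mu|=r<p^k\le p^i$. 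Together, these are precisely the equalities characterising $\chi^\mu$ failing to be $p$-divisible (the criterion applied to $\mu$ in place of $\lambda$).

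Combining the cases yields that $\chi^\lambda$ is not $p$-divisible if and only if $|\mu|=r$ and $\chi^\mu$ is not $p$-divisible; negating, and using that $|\mu|\ge r$ always (so $|\mu|\ne r$ is equivalent to $|\mu|>r$), gives the statement. The main obstacle is bookkeeping: one must carefully cite (or rederive from the $p^i$-abacus) both the hook-counting identity $\sum_h v_p(h)=\sum_i w_{p^i}(\lambda)$ and the iterated-core identity $C_{p^i}\circ C_{p^k}=C_{p^i}$ for $p^i\mid p^k$, e.g.\ via \cite{M71} and \cite{O94}. Everything else is elementary arithmetic with the expansion $n=ap^k+r$.
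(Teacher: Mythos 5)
Your proof is correct. The paper offers no argument for this proposition, deferring entirely to Macdonald \cite{M71}; your derivation --- combining the hook-length formula and Legendre's formula with the identity $\sum_{(i,j)\in Y(\lambda)} v_p(h_{i,j}(\lambda))=\sum_{m\geq 1} w_{p^m}(\lambda)$ and the compatibility $C_{p^i}(C_{p^k}(\lambda))=C_{p^i}(\lambda)$ for $i\leq k$ --- is precisely the standard route to this statement and supplies the details the citation leaves implicit.
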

\bigskip

%========================================================================
\section{Character degrees in $2$-blocks of $\sym_n$}\label{sec:deg in sym}
	In this section we prove \cref{thm: char in block} for $\sym_n$.  The proof for $\alt_n$ is postponed until \cref{sec:deg in alt}. For $t\in \NN_0$, let $\sigma_t\in\set{1,2}$ be such that $\sigma_t \equiv t-1\ (\text{mod}\ 2)$ and let $\delta_{t,1}$ denote the Kroenecker delta, which takes value $1$ if $t=1$ and $0$ otherwise.

	\begin{theorem}\label{thm: main part A}
		Let $p$ be an odd prime, $n\geq p$ be a natural number and let $r,a,k\in\NN_0$ be such that $n=ap^k+r$, $a<p$ and $r<p^k$. Let $B_c\in \Block_2(\sym_n)$ be a $2$-block of $\sym_n$. If either
		\begin{enumerate}
			\item $2 \leq c\leq 10$ and $n\geq 66$, or
			\item $c\geq 11$ and $p^k\geq 2c-1$,
		\end{enumerate}
		then there exists some $p$-divisible $\chi\in\Irr(B_c)$. 
	\end{theorem}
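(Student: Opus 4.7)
The plan is to invoke Proposition~\ref{prop: core doesnt fit}: it suffices to exhibit a partition $\lambda$ of $n$ with $C_2(\lambda) = \gamma_c$ (so $\chi^\lambda \in \Irr(B_c)$) and $p^k$-weight $w_{p^k}(\lambda) \leq a - 1$, since then $|C_{p^k}(\lambda)| = n - p^k w_{p^k}(\lambda) \geq r + p^k > r$, and the proposition gives $p \mid \chi^\lambda(1)$. Because $n \geq p$, we may assume $a \geq 1$ (replacing $k$ by a smaller exponent if needed) so that this target range is non-empty.

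The combinatorial arena is James' abacus. By Proposition~\ref{prop: remove hook} each $p^k$-hook removal is a bead up-move on the $p^k$-abacus, while by Lemma~\ref{lem: in core c} the condition $C_2(\lambda) = \gamma_c$ amounts to $|X_1| - |X_0| \in \set{c,\, -c-1}$ for a $\beta$-set $X$ of $\lambda$. Crucially, $p^k$ is odd, so each up-move on the $p^k$-abacus flips the parity of the moved bead, linking the two structures. For case~(2), where $c \geq 11$ and $p^k \geq 2c - 1$, I would begin with the $\beta$-set $X_0 = \set{1, 3, \ldots, 2c-1}$ of $\gamma_c$, whose beads occupy $c$ distinct (odd-indexed) runners of the $p^k$-abacus, all in row $0$. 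To build $\lambda$ of size $n$, replace the bead $2c - 1$ by $2c - 1 + 2M$ where $M = w(B_c) = (n - c(c+1)/2)/2$; this yields $\lambda = (c + 2M,\, c-1,\, c-2,\, \ldots,\, 1)$, which lies in $B_c$ because the modified bead stays odd. A direct calculation places that bead at row at most $a$ on the $p^k$-abacus. When the row is at most $a - 1$, or it equals $a$ but the landing runner is already occupied by a base bead (so the up-push only takes $a - 1$ steps), we get $w_{p^k}(\lambda) \leq a - 1$ and are done. Only the ``bad'' case -- row $a$ on a base-free runner -- requires a variant construction: move a different base bead, or enlarge the $\beta$-set by adjoining a matched pair of beads of opposite parities, exploiting the fact that $p^k \geq 2c - 1$ provides enough odd-indexed runners to steer the moved bead into collision with a base bead.

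For case~(1), with $2 \leq c \leq 10$ and $n \geq 66$, split on whether $p^k \geq 2c - 1$ (where the argument above applies verbatim) or $p^k < 2c - 1 \leq 19$. In the latter regime only finitely many triples $(p, k, c)$ occur, and the lower bound $n \geq 66$ makes $a = \lfloor n / p^k \rfloor$ large enough that a richer bead-manipulation construction -- distributing the mass over several modified beads on the $p^k$-abacus while tracking parities via Lemma~\ref{lem: even is even} -- still produces a $\lambda \in B_c$ of size $n$ with $w_{p^k}(\lambda) \leq a - 1$. The main obstacle throughout is the ``bad'' placement where the natural extension forces weight exactly $a$ on an unoccupied runner; the hypotheses $c \geq 11$ and $n \geq 66$ are tuned to give enough combinatorial slack -- in the choice of which base bead to move or which auxiliary beads to append -- that this obstruction can always be sidestepped.
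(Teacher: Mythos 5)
Your reduction is exactly the one the paper uses: find $\lambda\in\Part(n)$ with $C_2(\lambda)=\gamma_c$ and $w_{p^k}(\lambda)\le a-1$, so that $\abs{C_{p^k}(\lambda)}=n-p^kw_{p^k}(\lambda)\ge p^k+r>r$ and \cref{prop: core doesnt fit} applies. Moreover your basic construction --- move the bead $2c-1$ of the $\beta$-set $\set{1,3,\dots,2c-1}$ of $\gamma_c$ down by $2M$ --- yields precisely the $\beta$-set $\set{n+\tfrac{c(c-1)}{2}-(c-1)^2,\,2c-3,\,2c-5,\dots,1}$ of the paper's Case~1, and is correct whenever that large bead sits in row at most $a-1$. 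The genuine gap is your treatment of the ``bad'' case where it lands in row $a$: neither of your proposed fixes works in general. Indeed, if $r\equiv\tfrac{c(c+1)}{2}\pmod{p^k}$ then $2M\equiv 0\pmod{p^k}$, so moving \emph{any} single base bead down by $2M$ sends it exactly $a$ rows down its \emph{own} runner while vacating its row-$0$ position; pushing it back up gives $w_{p^k}(\lambda)=a$ exactly and $C_{p^k}(\lambda)=\gamma_c$, whose character degree is in general not divisible by $p$. Concretely, for $c=3$, $p=23$, $k=1$, $r=6$ (so $n=23a+6\ge 66$), every such $\lambda$ has $\abs{C_{23}(\lambda)}=6=r$ and $C_{23}(\lambda)=(3,2,1)$ with $\chi^{(3,2,1)}(1)=16$, so \cref{prop: core doesnt fit} gives nothing. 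Adjoining matched pairs $\set{0,1}$ does not help here either: it shifts every base bead and the landing position by the same even amount, so the moved bead still returns to its own vacated runner and no collision can be manufactured. For $c=2$ there are so few occupied runners that the collision trick fails for most residues $r$ even when $2M\not\equiv 0$.

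To escape this one must change the distance the large bead travels and compensate by enlarging \emph{other} beads, i.e.\ redistribute the excess mass over several beads kept strictly below $p^k$ (hence still in row $0$). That is what the paper's Case~2 does: it caps the large bead at $ap^k-\sigma_a$ (forcing row $\le a-1$) and fills successive beads at positions $p^k-2(i-1+\delta_{a,1})$ until the size reaches $n$, with the hypotheses $p^k\ge 2c-1$ (resp.\ $p^k\ge 20$) guaranteeing the process terminates within $c$ beads and \cref{lem: even is even} controlling the parity so the $2$-core stays $\gamma_c$. Your proposal contains no such mechanism, and this is not a removable technicality --- it is the heart of the proof. (Your collision idea is not wasted: the paper's Case~3 for $c=2$ uses exactly it, placing the large bead at $ap^k+3$ on the occupied runner $3$ and parking the residue in a fourth bead at $r-1$; and the remaining small-$p^k$ configurations are finite in number and dispatched by direct computation, as you would also ultimately need to do.)
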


	Before we proceed with the proof, we give an example that illustrates some of the key ideas behind the proof of \cref{thm: main part A}. 
	\begin{example}
		Let $n=75$, $p=11$ and $c=5$. Let $r,a,k\in \NN_0$ be such that $n=ap^k+r$, $a<p$ and $r<p^k$, so in this example $r=9$, $a=6$ and $k=1$. Recall that if $X=\set{x_1,x_2,\dots,x_t}$ is a $\beta$-set, then the associated partition of $X$ is defined as $P(X)=(x_1-(t-1),x_2-(t-2),\dots,x_t)$ and so $\abs{P(X)} = x_1 + x_2 + \dots x_t - \frac{t(t-1)}{2}$.
		
		We will construct a $\beta$-set $X$ such that $C_2(P(X))=\gamma_5$, $P(X)=75$ and $\abs{C_{11}(P(X))}\neq 9$. This implies that $\chi^{P(X)}\in\Irr(B_5)$ is $11$-divisible by \cref{prop: core doesnt fit}, where $B_5$ is the $2$-block of $\sym_{75}$ indexed by the $2$-core partition $\gamma_5=(5,4,3,2,1)$. In order to construct such a $\beta$-set, we start with a $\beta$-set $Y$ such that $\abs{Y}$ is minimal and $C_2(P(Y))=\gamma_5$, which is $Y=\set{9,7,5,3,1}$. The $2$-abacus configuration of $Y$ is illustrated in \cref{fig: proof example} (A).

		We will now proceed to move beads down along the runners of the $2$-abacus configuration of $Y$, by moving one bead at a time. The bead in the largest position, namely that in position 9, is moved down along its runner until \emph{either} the corresponding partition is of size $n$ \emph{or} the bead is in position $ap^k-\sigma_a$, i.e. the row immediately above $ap^k$. In this example $ap^k-\sigma_a = 65$ and the corresponding partition is of size $65+7+5+3+1-10 = 71 < 75$. This gives us the $2$-abacus configuration as shown in \cref{fig: proof example} (B). Since this partition is of size less than $n$, we will continue moving beads down. 
		
		Next we move the bead in the second largest position down along its runner. We do so until \emph{either} the corresponding partition is of size $n$ \emph{or} the bead is in the position $p^k-2$. In this example the second largest bead moves from position $7$ to position $9$ and we get the $2$-abacus configuration as shown in \cref{fig: proof example} (C). The corresponding partition is of size $73$. Since $73<75$ we will continue moving beads down.
		
		For the remaining beads on the right-hand runner of the $2$-abacus, we move them down one bead at a time, not allowing beads to pass each other on the abacus. We move a bead down until \emph{either} the corresponding partition is of size $n$ \emph{or} the bead can no longer move down the runner. Note that moving the bead at the third largest position down one step on its runner gives us the $2$-abacus configuration as illustrated in \cref{fig: proof example} (D). The $\beta$-set of this $2$-abacus configuration is $X=\set{65,9,7,3,1}$ and note that $\abs{P(X)}= 65+9+7+3+1-10 = 75$. Furthermore, all $5$ beads are on the second runner and so $C_2(P(X))=\gamma_5$. Hence $\chi^{P(X)}\in \Irr(B_c)$ for $B_c\in \Block_2(\sym_n)$.
		
		Lastly, note that $65<6\cdot 11$ and $1,3,7,9 < 11$, which implies that $\abs{C_{11}(P(X))}>9$. It follows that $\chi^{P(X)}$ is $11$-divisible by \cref{prop: core doesnt fit}.

		\begin{figure}[H]
			\captionsetup{justification=centering}
			\begin{subfigure}[t]{.15\textwidth}
			\begin{tikzpicture}[scale=0.4,every node/.style={scale=1.2}]
				\draw (0,-.5) -- (1,-.5);
        %runners for the abacus
        \draw (0,-.5) -- (0,-7);
				\draw[dashed] (0,-7) -- (0,-10);
				\draw (0,-9.5) -- (0,-15);
				\draw[dashed] (0,-15) -- (0,-17);
        \draw (1,-.5) -- (1,-7);
				\draw[dashed] (1,-7) -- (1,-10);
				\draw (1,-9.5) -- (1,-15);
				\draw[dashed] (1,-15) -- (1,-17);
				\foreach \n in {1,...,7}{\draw (0,-\n) node[] {$\circ$};}
				\foreach \n in {10,...,15}{\draw (0,-\n) node[] {$\circ$};}
				\foreach \n in {1,...,7}{\pgfmathsetmacro{\sumval}{2*\n-1}
    		\draw (1,-\n) node[label=right:{\tiny\pgfmathprintnumber[fixed,precision=1]{\sumval}}] {$\circ$};}
				\foreach \n in {10,...,15}{\pgfmathsetmacro{\sumval}{2*\n+41}
    		\draw (1,-\n) node[label=right:{\tiny\pgfmathprintnumber[fixed,precision=1]{\sumval}}] {$\circ$};}
				\foreach \n in {1,...,5}{\draw (1,-\n) node[] {$\bullet$};}
      \end{tikzpicture}
			\caption{}
		\end{subfigure}
		~
		\begin{subfigure}[t]{.15\textwidth}
			\begin{tikzpicture}[scale=0.4,every node/.style={scale=1.2}]
				\draw (0,-.5) -- (1,-.5);
        %runners for the abacus
        \draw (0,-.5) -- (0,-7);
				\draw[dashed] (0,-7) -- (0,-10);
				\draw (0,-9.5) -- (0,-15);
				\draw[dashed] (0,-15) -- (0,-17);
        \draw (1,-.5) -- (1,-7);
				\draw[dashed] (1,-7) -- (1,-10);
				\draw (1,-9.5) -- (1,-15);
				\draw[dashed] (1,-15) -- (1,-17);
				\foreach \n in {1,...,7}{\draw (0,-\n) node[] {$\circ$};}
				\foreach \n in {10,...,15}{\draw (0,-\n) node[] {$\circ$};}
				\foreach \n in {1,...,7}{\pgfmathsetmacro{\sumval}{2*\n-1}
    		\draw (1,-\n) node[label=right:{\tiny\pgfmathprintnumber[fixed,precision=1]{\sumval}}] {$\circ$};}
				\foreach \n in {10,...,15}{\pgfmathsetmacro{\sumval}{2*\n+41}
    		\draw (1,-\n) node[label=right:{\tiny\pgfmathprintnumber[fixed,precision=1]{\sumval}}] {$\circ$};}
				\foreach \n in {1,...,4}{\draw (1,-\n) node[] {$\bullet$};}
				\draw (1,-12) node[] {$\bullet$};
      \end{tikzpicture}
			\caption{}
		\end{subfigure}
		~
		\begin{subfigure}[t]{.15\textwidth}
			\begin{tikzpicture}[scale=0.4,every node/.style={scale=1.2}]
				\draw (0,-.5) -- (1,-.5);
        %runners for the abacus
        \draw (0,-.5) -- (0,-7);
				\draw[dashed] (0,-7) -- (0,-10);
				\draw (0,-9.5) -- (0,-15);
				\draw[dashed] (0,-15) -- (0,-17);
        \draw (1,-.5) -- (1,-7);
				\draw[dashed] (1,-7) -- (1,-10);
				\draw (1,-9.5) -- (1,-15);
				\draw[dashed] (1,-15) -- (1,-17);
				\foreach \n in {1,...,7}{\draw (0,-\n) node[] {$\circ$};}
				\foreach \n in {10,...,15}{\draw (0,-\n) node[] {$\circ$};}
				\foreach \n in {1,...,7}{\pgfmathsetmacro{\sumval}{2*\n-1}
    		\draw (1,-\n) node[label=right:{\tiny\pgfmathprintnumber[fixed,precision=1]{\sumval}}] {$\circ$};}
				\foreach \n in {10,...,15}{\pgfmathsetmacro{\sumval}{2*\n+41}
    		\draw (1,-\n) node[label=right:{\tiny\pgfmathprintnumber[fixed,precision=1]{\sumval}}] {$\circ$};}
				\foreach \n in {1,...,3}{\draw (1,-\n) node[] {$\bullet$};}
				\draw (1,-12) node[] {$\bullet$};
				\draw (1,-5) node[] {$\bullet$};
      \end{tikzpicture}
			\caption{}
		\end{subfigure}
		~
		\begin{subfigure}[t]{.15\textwidth}
			\begin{tikzpicture}[scale=0.4,every node/.style={scale=1.2}]
				\draw (0,-.5) -- (1,-.5);
        %runners for the abacus
        \draw (0,-.5) -- (0,-7);
				\draw[dashed] (0,-7) -- (0,-10);
				\draw (0,-9.5) -- (0,-15);
				\draw[dashed] (0,-15) -- (0,-17);
        \draw (1,-.5) -- (1,-7);
				\draw[dashed] (1,-7) -- (1,-10);
				\draw (1,-9.5) -- (1,-15);
				\draw[dashed] (1,-15) -- (1,-17);
				\foreach \n in {1,...,7}{\draw (0,-\n) node[] {$\circ$};}
				\foreach \n in {10,...,15}{\draw (0,-\n) node[] {$\circ$};}
				\foreach \n in {1,...,7}{\pgfmathsetmacro{\sumval}{2*\n-1}
    		\draw (1,-\n) node[label=right:{\tiny\pgfmathprintnumber[fixed,precision=1]{\sumval}}] {$\circ$};}
				\foreach \n in {10,...,15}{\pgfmathsetmacro{\sumval}{2*\n+41}
    		\draw (1,-\n) node[label=right:{\tiny\pgfmathprintnumber[fixed,precision=1]{\sumval}}] {$\circ$};}
				\foreach \n in {1,...,2}{\draw (1,-\n) node[] {$\bullet$};}
				\draw (1,-12) node[] {$\bullet$};
				\draw (1,-5) node[] {$\bullet$};
				\draw (1,-4) node[] {$\bullet$};
      \end{tikzpicture}
			\caption{}
		\end{subfigure}
		\caption{$2$-abacus configuration of various $\beta$-sets.\\
			(A) $\beta$-set $Y=\set{9,7,5,3,1}$ with associated partition $P(Y) = (5,4,3,2,1)$ of size $15$.  \\
			(B) $\beta$-set $\set{65,7,5,3,1}$ corresponding to a partition of size $71$. \\
			(C) $\beta$-set $\set{65,9,5,3,1}$ corresponding to a partition of size $73$. \\
			(D) $\beta$-set $X=\set{65,9,7,2,1}$ with associated partition $P(X)=(61,6,5,2,1)$ of size $75$.}
			 \label{fig: proof example}
	\end{figure}
	\end{example}

	\begin{proof}[Proof of \cref{thm: main part A}]\label{proof: A}
		We first observe that if there exists a $\beta$-set $X$ that satisfies the following conditions: 
		\begin{enumerate}[label=(\roman*)]
			\item $P(X)\in \Part(n)$, \label{size}
			\item $\abs{X_1}\geq \abs{X_0}$ and $\abs{X_1}-\abs{X_0}=c$, and \label{core}
			\item $C_{p^k}(P(X)) > r$, \label{hooks}
		\end{enumerate}
		then $\chi^{P(X)}\in\Irr(B_c)$ is a $p$-divisible character by \cref{lem: in core c} and \cref{prop: core doesnt fit}.  

		Now, recall that \emph{either} $2 \leq c\leq 10$ and $n\geq 66$, \emph{or} $c\geq 11$ and $p^k \geq 2c-1$. To begin with, we split these two conditions into the following eight situations:
		\begin{enumerate}[label=(\alph*)]
			\item $c=2$, $n \geq 66$ and $r+3+2\delta_{a,1}+\sigma_a \leq p^k$, \label{a}
			\item $c=2$, $n \geq 66$, $r+3+2\delta_{a,1}+\sigma_a > p^k$ and $p^k \geq 10$, \label{b}
			\item $c=2$, $n \geq 66$, $r+3+2\delta_{a,1}+\sigma_a > p^k$ and $p^k < 10$, \label{c}
			\item $3\leq c \leq 10$, $n \geq 66$, $20\leq p^k$ and $n+\frac{c(c-1)}{2}-(c-1)^2 \leq ap^k-\sigma_a$, \label{d}
			\item $3\leq c \leq 10$, $n \geq 66$, $20\leq p^k$ and $n+\frac{c(c-1)}{2}-(c-1)^2 > ap^k-\sigma_a$, \label{e}
			\item $3\leq c \leq 10$, $n \geq 66$ and $20\leq p^k$, \label{f}
			\item $c\geq 11$, $2c-1\leq p^k$ and $n+\frac{c(c-1)}{2}-(c-1)^2 \leq ap^k-\sigma_a$, \label{g}
			\item $c\geq 11$, $2c-1\leq p^k$ and $n+\frac{c(c-1)}{2}-(c-1)^2 > ap^k-\sigma_a$. \label{h}
		\end{enumerate}
		If either \ref{c} or \ref{f} holds, then we show that $\Irr(B_c)$ contains a $p$-divisible character. On the other hand, if one of \ref{a}, \ref{b}, \ref{d}, \ref{e}, \ref{g} or \ref{h} holds, then we construct a $\beta$-set $X$ such that \ref{size}, \ref{core} and \ref{hooks} hold. We do so, by considering four cases, where each of the conditions \ref{a} -- \ref{h} is assumed in exactly one of the cases.

		\noindent\textbf{Case 1:} Suppose that either \ref{d} or \ref{g} holds. Let 
		\begin{align*}
			x_1 = n+\frac{c(c-1)}{2} - (c-1)^2\ \text{ and }\ x_i=2(c-i)+1\ \text{for}\ i\in[2,c].
		\end{align*}
		Since $B_c\in \Block_2(\sym_n)$ we have that $n\geq \abs{\gamma_c} = \frac{c(c+1)}{2}$ which implies that $x_1 > x_2$. Hence, $X=\set{x_1,x_2,\cdots,x_c}$ is a $\beta$-set, which satisfies \ref{size} by direct computation. It is clear that $x_i$ is odd for $i\in[2,c]$ and so \cref{lem: even is even} implies that $x_1$ is odd. Hence $X$ satisfies condition \ref{core}. 
		
		By assumption, we have that $x_1 \leq ap^k-\sigma_a$ and so $x_1 - ap^k < 0$. In order to show that $X$ satisfies condition \ref{hooks} it remains to show that $x_i<p^k$ for $i\in[2,c]$. Since $x_2 > x_3 > \dots > x_c$ it is enough to show that $x_2 \leq p^k-2$. On the one hand, if $3\leq c \leq 10$, then $x_2 = 2c-3 \leq 17 < p^k$. On the other hand, if $c\geq 11$ then $x_2 = 2c-3 < p^k$ by assumption. It follows from \cref{prop: remove hook} that \ref{hooks} holds.

		\noindent\textbf{Case 2:} Suppose that one of \ref{a}, \ref{e} or \ref{h} holds. For $i\in \NN$, let \begin{align*}
			f(n,c,p,i) := n+\frac{c(c-1)}{2} + \sigma_a + (2-a-i)p^k + 2\delta_{a,1}(i-2) -c^2 +2ci - 3i +2
		\end{align*}
		and let $s:= \min\set{i\in\NN \mid f(n,c,p,i) \leq p^k - 2(i-1+\delta_{a,1})}$. We claim that $s\leq c$. 
			\begin{itemize}
				\item If $c=2$, then 
					\begin{align*}
						f(n,2,p,2) = r + \sigma_a + 1 \leq p^k-2(1+\delta_{a,1})
					\end{align*}
					and so $s\leq c$.
				\item If $c\geq 3$ then 
					\begin{align*}
						f(n,c,p,c) &= r + (2-c)p^k + \sigma_a + 2\delta_{a,1}(c-1) +c^2-c + \frac{c(c-1)}{2}+(2-2c-2\delta_{a,1}) \\
						&\leq (3-c)p^k + \sigma_a + 2\delta_{a,1}(c-1)+c^2-c + \frac{c(c-1)}{2}-1 + (2-2c-2\delta_{a,1}).
					\end{align*}
					Now, we claim that $\sigma_a + 2\delta_{a,1}(c-1)+c^2-c + \frac{c(c-1)}{2}-1 \leq (c-2)p^k$. 
					\begin{itemize}
						\item If $3\leq c \leq 10$, then 
							\begin{align*}
								\frac{\sigma_a + 2\delta_{a,1}(c-1)+c^2-c + \frac{c(c-1)}{2} -1}{c-2} \leq \frac{3c^2+c-2}{2(c-2)} < 20 \leq p^k
							\end{align*}
							and so $\sigma_a + 2\delta_{a,1}(c-1)+c^2-c + \frac{c(c-1)}{2}-1 \leq (c-2)p^k$. 
						\item If $c\geq 11$ then 
							\begin{align*}
								\sigma_a + 2\delta_{a,1}(c-1)+c^2-c + \frac{c(c-1)}{2}-1 &\leq c^2 +c+\frac{c(c-1)}{2}-1\\ &\leq (c-2)(2c-1) \leq (c-2)p^k.
							\end{align*}
					\end{itemize}
					Hence $f(n,c,p,c) \leq p^k-2(c-1+\delta_{a,1})$ which implies that $s\leq c$.
			\end{itemize}
			Now, let
			\begin{align*}
				x_1 &= ap^k-\sigma_a,\\
				x_i &= p^k - 2(i-1+\delta_{a,1})\ \text{for}\ i\in[2,s-1],\\
				x_s &= f(n,c,p,s),\\
				x_j &= 2(c-j)+1\ \text{for}\ j\in [s+1,c].
			\end{align*}
			Next we show that $X=\set{x_1,x_2,\dots,x_c}$ is a $\beta$-set, so we need to show that $x_{s-1}>x_s$ and $x_s>x_{s+1}$. Note that the $x_{s-1}>x_2$ by the definition of $s$. Now observe that if $s=1$ then
			\begin{align*}
				f(n,c,p,1) \leq p^k -2\delta_{a,1}\ \Longleftrightarrow\ n+\frac{c(c-1)}{2} - (c-1)^2 \leq ap^k -\sigma_a
			\end{align*}
			which contradicts the assumption that $n+\frac{c(c-1)}{2} - (c-1)^2 > ap^k -\sigma_a$ in \ref{e} and \ref{h}. On the other hand, if $c=2$ then $f(n,c,p,1)\leq p^k -\delta_{a,1}$ is equivalent to $r+\sigma_a \leq 0$ which is not possible since $\sigma_a \geq 1$ and $r\geq 0$. It follows that $f(n,c,p,s-1) > p^-2(s-2+\delta_{a,1})$ by minimality of $s$. Since $f(n,c,p,s) = f(n,c,p,s-1) -p^k+2c+2\delta_{a,1}-3$, it follows that
				\[ x_s  > (p^k-2(s-2+\delta_{a,1}))-p^k+2c+2\delta_{a,1}-3 > 2(c-(s+1))+1 = x_{s+1}. \]
			Hence $X$ is a $\beta$-set. Note that $f(n,c,p,s) = n + \frac{c(c-1)}{2}-\sum_{i\neq s} x_i$ and so \ref{size} holds. It is clear that $x_i$ is odd for all $i\neq s$ by definition, and so $x_s$ is odd by \cref{lem: even is even} which implies that \ref{core} holds. Lastly, since $x_1 - ap^k \leq - \sigma_a < 0$ and $x_2 \leq p^k-2(1+\delta_{a,1})<p^k$ we have that \ref{hooks} holds by \cref{prop: remove hook}.

		\noindent\textbf{Case 3:} Suppose that \ref{b} holds. Let
			\[ x_1 = ap^k + 3,\ x_2 = r-1,\ x_3 = 3\ \text{and}\ x_4 = 1. \]
		Since $r<p^k$ we have that $x_2 \leq p^k - 2 < ap^k+3 = x_1$. Furthermore, by assumption we have that $x_2 = r-1 > p^k-3-2\delta_{a,1}-\sigma_a - 1 \geq p^k -8 \geq 3$ and so $x_2 > x_3$. Hence $X$ is a $\beta$-set. We have that \ref{size} holds by direct computation. Since $c=2$ and $\frac{c(c+1)}{2}=\abs{\gamma_c} \equiv n\ (\text{mod}\ 2)$, it follows that $n$ is odd. Hence, if $x_1$ is even then $x_2$ is odd, and vice versa, which implies that \ref{core} holds. Lastly, since $p^k\geq 10$, $x_1 - ap^k $

		\noindent\textbf{Case 4:} Suppose that either \ref{c} or \ref{f} holds. Since both \ref{c} and \ref{f} only hold for a finite number of $n$, $c$, $p$ we have by direct computation that there exists a $\lambda\in \Part(n)$ such that $\lambda\neq \lambda'$ and $\chi^\lambda \in \Irr(B_c)$ is $p$-divisible. 
	\end{proof}

	We make the following observation regarding the proof of \cref{thm: main part A}, which will be important for the proof of \cref{thm: char in block} for $G=\alt_n$ in \cref{sec:deg in alt}. Let $X$ be a $\beta$-set constructed in the proof of \cref{thm: main part A}. In Case 1 and Case 2 we note that $\ell(P(X))=c$ and so $P(X)$ is self-conjugate if and only if $x_1 = 2c-1$, which happens exactly when $n=\frac{c(c+1)}{2}$. For Case 3, $P(X)$ is never self-conjugate, since this would require $P(X)_1= 4$, and since $P(X)_1 = ap^k$ this is not possible. Hence we have that if $n$ and $c$ satisfy $n>\frac{c(c+1)}{2}$, then $\chi^{P(X)}\in \Irr(B_c)$ is $p$-divisible and $P(X)$ is not self-conjugate. Lastly, in Case 4 we have by direct computation that there exists a $\lambda\in \Part(n)$ such that $\lambda\neq \lambda'$ and $\chi^\lambda\in\Irr(B_c)$ is $p$-divisible. Hence, we have made the following observation:

	\begin{remark}\label{rmk: const of X}
		Let $p$ be an odd prime, $n\geq p$ an integer and $B_c\in \Block_2(\sym_n)$ be such that \emph{either} $2\leq c\leq 10$ and $n\geq 66$ \emph{or} $c\geq 11$ and $p^k \geq 2c-1$. Then $w(B_c)>0$ if and only if there exists some $p$-divisible $\chi^\lambda\in\Irr(B_c)$ such that $\lambda\neq\lambda'$.
	\end{remark}

	\begin{proof}[Proof of \cref{cor: N} for $G=\sym_n$]
		Since $p\geq 3$ we have that $n\geq 74$. So for blocks $B_c\in \Block_2(\sym_n)$ such that $2\leq c\leq 10$ we have that $B_c$ contains a $p$-divisible irreducible character by \cref{thm: main part A}. Hence, it remains to consider when $c\geq 11$.

		Let $\tilde{c}=\max\set{c\in \NN:\frac{c(c+1)}{2}\leq n}$. By the above, we may assume that $\tilde{c}\geq 11$. By maximality of $\tilde{c}$ we have that 
			\[ n-1 \leq \frac{(\tilde{c}+1)(\tilde{c}+2)}{2} \leq \frac{(\tilde{c}+2)^2}{2}-6 \]
		and together with the assumption that $n\geq 8p^2+2p-4$ we get $\tilde{c}+1 > 4p$. It follows that
			\[ p^k \geq \frac{n}{p} \geq \frac{\tilde{c}(\tilde{c}+1)}{2p} \geq 2\tilde{c} > 2\tilde{c}-1 \]
		Since  $2\tilde{c}-1 \leq p^k$ implies that $2c-1\leq p^k$ for all $11\leq c \leq \tilde{c}$ it follows that every $B_c\in \Block_2(\sym_n)$ contains a $p$-divisible irreducible character by \cref{thm: main part A}.
	\end{proof}

\bigskip

%========================================================================
\section{In the case of $\alt_n$}\label{sec:deg in alt}
	The representation theory of the alternating groups is strongly connected with the representation theory of symmetric groups, for details see \cite{JK81}. In this section we will introduce the theory needed to prove \cref{thm: char in block} and \cref{cor: N} for $G=\alt_n$. Since $\alt_n$ is an index 2 subgroup of $\sym_n$, $\alt_n$ is a normal subgroup of $\sym_n$ and so it follows from Clifford theory that on the one hand, if $\lambda\neq \lambda'$ then $\chi^\lambda\res_{\alt_n} = \zeta_\lambda$ for some $\zeta_\lambda\in\Irr(\alt_n)$. On the other hand, if $\lambda=\lambda'$ then $\chi^\lambda\res_{\alt_n} = \zeta_\lambda^+ + \zeta_\lambda^-$ for some $\zeta_\lambda^+,\zeta_\lambda^-\in \Irr(\alt_n)$ and $(\zeta_\lambda^+)^g = \zeta_\lambda^-$ for any $g\in\sym_n\setminus\alt_n$. Hence for any $\zeta_\lambda\in\Irr(\alt_n)$ we have that $\zeta_\lambda(1)\in\set{\chi^\lambda(1),\frac{\chi^\lambda(1)}{2}}$. Thus if $p$ is an odd prime and $\chi\in\Irr(\sym_n)$ is $p$-divisible, then every constituent of $\chi\res_{\alt_n}$ is $p$-divisible. 

	We now proceed to describe the $2$-blocks of $\alt_n$. Let $b\in\Block_2(\alt_n)$. We say that a block $B\in\Block_2(\sym_n)$ \emph{covers} $b$ if there exists some $\chi\in \Irr(B)$ such that $b$ contains a constituent of $\chi\res_{\alt_n}$. Recall that the weight of a $2$-block $B_c$ of $\sym_n$ is $w(B_c) = \frac{\abs{\lambda}-\abs{\gamma_c}}{2}$, where $\chi^\lambda\in\Irr(B_c)$. It follows from \cite[Theorem 6.1.46]{JK81} that if $w(B_c)>0$ then $B_c$ covers a unique $2$-block $b$ of $\alt_n$, which we denote by $b_c$, and no other $2$-block of $\sym_n$ covers $b$. Hence, we denote by $b_c$ the unique $2$-block covered by $B_c\in\Block_2(\sym_n)$ with $w(B_c)>0$. In particular, we have that $\Irr(b_c) = \bigcup_{\chi\in\Irr(B_c)} \Irr(\chi\res_{\alt_n})$. On the other hand, if $w(B_c)=0$ then $\Irr(B_c)=\set{\gamma_c}$ for some $c\in\NN_0$. Since $\chi^{\gamma_c}\res_{\alt_n} = \zeta_{\gamma_c}^+ + \zeta_{\gamma_c}^-$, \cite[Theorem 6.1.46]{JK81} implies that $\zeta_{\gamma_c}^+$ and $\zeta_{\gamma_c}^-$ each lie in their own $2$-block and no other $2$-block of $\sym_n$ covers these $2$-blocks. Let $b_c^+$ and $b_c^-$ denote the $2$-blocks of $\alt_n$ such that $\Irr(b_c^+)=\set{\zeta_{\gamma_c}^+}$ and $\Irr(b_c^-)=\set{\zeta_{\gamma_c}^-}$. Hence, we have that
		\begin{align}\label{eq: block of an}
			\Block_2(\alt_n) = \set{b_c \mid B_c \in \Block_2(\sym_n),\ w(B_c) > 0} \cup \set{b_c^+, b_c^- \mid B_c\in\Block_2(\sym_c),\ w(B_c) = 0}.
		\end{align}
	The following is an immediate consequence of the above.

	\begin{proposition}\label{prop: char in alt}
		Let $p$ be an odd prime, $n\geq p$ a natural number and let $b\in\Block_2(\alt_n)$. If $B\in\Block_2(\sym_n)$ covers $b$ and $B$ contains a $p$-divisible ordinary irreducible character, then $b$ contains a $p$-divisible ordinary irreducible character. 
	\end{proposition}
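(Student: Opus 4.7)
The plan is to combine the Clifford-theoretic discussion at the start of \cref{sec:deg in alt} with the explicit description of $\Block_2(\alt_n)$ in \eqref{eq: block of an}. By hypothesis, there is some $p$-divisible $\chi^\lambda \in \Irr(B)$, where $p$ is odd. My first step is to observe that every irreducible constituent $\zeta$ of $\chi^\lambda\res_{\alt_n}$ is itself $p$-divisible: the paragraph preceding the proposition records that $\zeta(1)\in\set{\chi^\lambda(1),\, \chi^\lambda(1)/2}$, and since $p$ is odd and $p\mid\chi^\lambda(1)$, in either case $p\mid\zeta(1)$.

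Next, I need to produce such a constituent that actually lies in $b$. I will split according to whether $w(B)>0$ or $w(B)=0$, matching the two families in \eqref{eq: block of an}. If $w(B)>0$, write $B=B_c$; then by the paragraph just before \eqref{eq: block of an}, $B_c$ covers the unique $2$-block $b_c$ of $\alt_n$, and $\Irr(b_c)=\bigcup_{\chi\in\Irr(B_c)}\Irr(\chi\res_{\alt_n})$. So $b=b_c$ and every constituent of $\chi^\lambda\res_{\alt_n}$ already lies in $b$; combined with the first step, such a constituent is $p$-divisible and in $\Irr(b)$, as required.

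In the remaining case $w(B)=0$, write $B=B_c$ and note that $\Irr(B_c)=\set{\chi^{\gamma_c}}$, so necessarily $\lambda=\gamma_c$ and $\lambda=\lambda'$. Then $\chi^{\gamma_c}\res_{\alt_n}=\zeta_{\gamma_c}^+ + \zeta_{\gamma_c}^-$, and by \eqref{eq: block of an} these two constituents lie in the two distinct blocks $b_c^+$ and $b_c^-$, which are precisely the blocks of $\alt_n$ covered by $B_c$. Since $b\in\set{b_c^+,b_c^-}$, the corresponding constituent of $\chi^{\gamma_c}\res_{\alt_n}$ lies in $b$ and is $p$-divisible by the first step, finishing the proof.

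There is no real obstacle here: the statement is genuinely an immediate bookkeeping consequence of the Clifford theory already recalled and of the description of $\Block_2(\alt_n)$ in \eqref{eq: block of an}. The only point to be mindful of is keeping track of the two cases $w(B)>0$ and $w(B)=0$ separately, since in the second case the block $B$ of $\sym_n$ covers two blocks of $\alt_n$ rather than one.
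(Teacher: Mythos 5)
Your argument is correct and is exactly the reasoning the paper intends: the paper states this proposition as ``an immediate consequence of the above,'' meaning precisely the degree fact $\zeta(1)\in\set{\chi^\lambda(1),\chi^\lambda(1)/2}$ combined with the block-covering description in \eqref{eq: block of an}, with the same split into $w(B)>0$ and $w(B)=0$. You have simply written out the details the paper leaves implicit.
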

	
	Before we prove \cref{thm: char in block} and \cref{cor: N} for $G=\alt_n$, we need the following lemma.

	\begin{lemma}\label{lem: irr}
		Let $n\in \NN$ and let $r\in\set{1,-1}$. Then $\prod_{i=1}^n (4i+r)$ is not a square. 
	\end{lemma}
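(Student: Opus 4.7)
The plan is to exhibit, for each pair $(n, r)$, a prime $p$ whose $p$-adic valuation in $P_n(r) := \prod_{i=1}^n (4i+r)$ is odd, which immediately precludes $P_n(r)$ from being a perfect square. The small cases $n \leq 4$ are checked by hand: the products lie in $\{3, 5, 21, 45, 231, 585, 3465, 9945\}$, none of which is a square.

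For $n \geq 5$ the target is an index $i^\ast \in [1,n]$ for which $4i^\ast + r$ has a prime divisor $p > n$. Given such a $p$, I would argue that $v_p(P_n(r)) = 1$: if $p \mid 4i + r$ and $p \mid 4i' + r$ with $i, i' \in [1,n]$, then $p \mid 4(i-i')$ and so $p \mid i - i'$ (as $p$ is odd), and the bound $|i - i'| \leq n - 1 < p$ forces $i = i'$; thus $p$ divides only the single factor $4i^\ast + r$ in the product. Furthermore, for $n \geq 5$ one has $p^2 > n^2 \geq 4n + 1 \geq 4i^\ast + r$, so $p^2 \nmid 4i^\ast + r$, and consequently $v_p(P_n(r)) = v_p(4i^\ast + r) = 1$.

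The crux is therefore the existence of the index $i^\ast$. This is an instance of a Sylvester--Erd\H{o}s-type theorem for arithmetic progressions, asserting that if $\gcd(a,d) = 1$ and $n \geq 1$, then $\prod_{k=0}^{n-1}(a + kd)$ has a prime divisor $p > n$ coprime to $d$ (see, e.g., work of Shorey--Tijdeman). Applied with $a = 4 + r$ and $d = 4$, this produces the required index. Reliance on such a prime-in-progression input is the main technical obstacle; a more self-contained alternative is to combine Bertrand's postulate on the interval $(2n, 4n + r]$ with an analysis of the residue modulo $4$ of the resulting prime, supplemented by the elementary observations that $P_n(-1) \equiv 3 \pmod 4$ when $n$ is odd and that $P_n(r) \equiv 5 \pmod 8$ whenever $n \equiv 2 \pmod 4$, which already dispose of several congruence classes of $n$ without further number-theoretic input.
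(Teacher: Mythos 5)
Your main argument is correct, but it takes a genuinely different route from the paper. You aim to exhibit a prime whose valuation in $\prod_{i=1}^n(4i+r)$ equals $1$: a prime $p>n$ dividing some factor divides no other factor and cannot divide that factor twice once $n\geq 5$, and the existence of such a $p$ is outsourced to a Sylvester--Erd\H{o}s-type theorem for arithmetic progressions. (Be careful with how you quote that theorem: as stated, for all $n\geq 1$ and arbitrary coprime $a,d$, it is false --- e.g.\ $a=1$, $d=3$, $n=2$ gives $1\cdot 4$ --- but the actual Shorey--Tijdeman result, valid for $d\geq 2$, $k\geq 3$ with the single exception $(a,d,k)=(2,7,3)$, does cover your application with $a=4+r$, $d=4$, $k=n\geq 5$.) The paper argues differently: it assumes the product is a square, takes its \emph{largest} prime divisor $p$, invokes Nagura's theorem to find a prime $q$ with $p<q<\frac{4p}{3}$, notes that $q$ or $3q$ lies in the progression and is less than $4p$, and shows that squareness forces the progression to contain a multiple of $p$ that is at least $5p$; hence $q$ divides the product, contradicting maximality of $p$. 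Both proofs rest on a prime-number-theoretic black box; yours is structurally cleaner (an explicit odd valuation rather than a maximality contradiction), while the paper's input, Nagura's explicit Bertrand-type bound, is much more elementary than Shorey--Tijdeman. One caution about your suggested ``more self-contained alternative'': Bertrand's postulate gives a prime in $(2n,4n)$ but says nothing about its residue modulo $4$, and your congruence observations only eliminate $r=-1$ with $n$ odd and the case $n\equiv 2\pmod 4$; the cases $r=1$ with $n$ odd or $4\mid n$, and $r=-1$ with $4\mid n$, would still require the prime-in-progression input, so that fallback as described does not close the argument.
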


	\begin{proof}
		For $i\in \NN$, let $x_i = 4i+r$ and let $X_n := \prod_{i=1}^n x_i$. If $n\leq 2$, then $X_n$ is not a square by direct computation. So suppose that $n\geq 3$. We proceed with a proof by contradiction. Assume that $X_n$ is a square and let $p$ denote the greatest prime divisor of $X_n$. So $p\geq 11$, and it follows from \cite[Theorem]{N52} that that there exists a prime $q$ such that $p < q < \frac{4p}{3}$, which implies that $3q < 4p$. Note that $q$ odd prime implies that either $q\equiv r\ (\text{mod}\ 4)$ or $q\equiv -r\ (\text{mod}\ 4)$. Let $s\in \NN$ be such that $x_s = q$ if $q\equiv r\ (\text{mod}\ 4)$ and if $q\equiv -r\ (\text{mod}\ 4)$ then $x_s = 3q$, so
		\begin{align}\label{find prime}
			x_s < 4p.
		\end{align}
		We now claim that $s\leq n$, i.e. $x_s\leq x_n$. Indeed, let us consider the following cases. Suppose that
		\begin{enumerate}[label=(\alph*)]
			\item $p^2 \mid x_i$ for some $i\leq n$ and that $p\equiv r\ (\text{mod}\ 4)$, then $5p \leq p^2 \leq x_i \leq x_n$.
			\item $p^2 \mid x_i$ for some $i\leq n$ and that $p\equiv -r\ (\text{mod}\ 4)$, so $7p \leq p^2 \leq x_i \leq x_n$.
			\item $p\mid x_i$ and $p\mid x_j$ for some $i<j\leq n$, and suppose that $p\equiv r\ (\text{mod}\ 4)$. The smallest distinct pair of multiples of $p$ congruent to $r$ modulo $4$ are $p$ and $5p$. It follows that $5p \leq x_j \leq x_n$.
			\item $p\mid x_i$ and $p\mid x_j$ for some $i<j\leq n$, and suppose that $p\equiv -r\ (\text{mod}\ 4)$. Then $3p$ and $7p$ are the smallest pair of distinct multiples of $p$ congruent to $r$ modulo $4$. Hence we have that $7p \leq x_j \leq x_n$.
		\end{enumerate}
		It follows from \eqref{find prime} and (a) -- (d) that $x_s<x_n$ which implies that $q\mid X_n$, contradicting maximality of $p$.
	\end{proof}

	We are now set to prove \cref{thm: char in block} and \cref{cor: N} for $G=\alt_n$. 

	\begin{proof}[Proof of \cref{thm: char in block} and \cref{cor: N} for $G=\alt_n$]
		Let $b\in\Block_2(\alt_n)$, then $b\in\set{b_c,b_c^+, b_c^-}$ for some $c\in\NN_0$ where $B_c$ is the unique $2$-block of $\sym_n$ covering $b$. It follows from \cref{thm: char in block} and \cref{cor: N} that if \emph{either} $2\leq c\leq 10$ and $n\geq 66$ \emph{or} $c\geq 11$ and $p^k\geq 2c-1$ then $\Irr(b)$ contains a $p$-divisible character. 

		We now proceed to prove the `in particular' part of \cref{thm: char in block}. Firstly, suppose that $w(b)>0$. Then $w(B_c)>0$ by definition and so there exists some $p$-divisible $\chi^\lambda\in\Irr(B_c)$ such that $\lambda$ is not self-conjugate by \cref{rmk: const of X}. Hence $\chi^\lambda\res_{\alt_n} = \zeta_\lambda \in \Irr(b)$ is $p$-divisible and since $\chi^\lambda$ is rational valued, so is $\zeta_\lambda$. Now, suppose that $w(b)=0$, then either $\Irr(b) = \set{\zeta_{\gamma_c}^+}$ or $\Irr(b) = \set{\zeta_{\gamma_c}^-}$.

 		We will proceed to show that $\zeta_{\gamma_c}^+$ and $\zeta_{\gamma_c}^-$ are not rational valued. Firstly, note that $h(\gamma_c) := ((\gamma_c)_{1,1}, (\gamma_c)_{2,2},\dots)$ is a partition of size $|\gamma_c|$ and that all parts of $h(\gamma_c)$ are odd and distinct. Let $\alpha$ denote the conjugacy class of $\sym_n$ that consists of elements with cycle type $h(\gamma_c)$. Then $\alpha$ splits into two conjugacy classes in $\alt_n$, say $\alpha^+$ and $\alpha^-$. Let $\sigma\in \alpha^+$ and $\tau\in\alpha^-$, then 
			\begin{align}\label{eq: not rational}
				\zeta_{\gamma_c}^\pm (\sigma) &= \frac{1}{2} \left( \gamma_c(\sigma) \pm \sqrt{\gamma_c(\sigma)\cdot \prod_{i=1}^{\ell(h(\gamma_c))} {h(\gamma_c)}_i} \right)\ \text{and}\\
				\zeta_{\gamma_c}^\pm (\tau) &= \frac{1}{2} \left( \gamma_c(\tau) \mp \sqrt{\gamma_c(\tau)\cdot \prod_{i=1}^{\ell(h(\gamma_c))} {h(\gamma_c)}_i} \right)
			\end{align}
		by \cite[Theorem 2.5.13]{JK81}. By the Murnaghan--Nakayama rule \cite[Theorem 2.4.7]{JK81} we have that $\gamma_c(\sigma) = \pm 1$. Furthermore, 
			\[ h(\gamma_c)_i = \begin{cases}
				4(\ell(h(\gamma_c))-i)+3 &\text{if}\ c\ \text{is even},\\
				4(\ell(h(\gamma_c))-i)+1 &\text{if}\ c\ \text{is odd},
			\end{cases} \]
		and so $\zeta_{\gamma_c}^\pm(\sigma)$ and $\zeta_{\gamma_c}^\pm(\tau)$ are not rational by \cref{lem: irr}. Hence $\zeta_{\gamma_c}^+$ and $\zeta_{\gamma_c}^-$ are not rational valued. 
	\end{proof}
\bigskip

%========================================================================
\subsection*{Acknowledgements}
	The author would like to thank Stacey Law for their support and feedback during the work of this article. The author would also like to thank Pavel Turek for their help. 
\bigskip

%========================================================================

\end{document}